\numberwithin{equation}{section}
\numberwithin{figure}{section}
\newtheorem*{rep@theorem}{\rep@title}
\newcommand{\newreptheorem}[2]{
\newenvironment{rep#1}[1]{
\def\rep@title{#2 ##1}
\begin{rep@theorem}}
{\end{rep@theorem}}}
\newtheorem{theorem}{Theorem}[section]
\newtheorem{lemma}[theorem]{Lemma}
\newtheorem{proposition}[theorem]{Proposition}
\newtheorem{corollary}[theorem]{Corollary}
\newtheorem{remark}[theorem]{Remark}
\theoremstyle{definition}
\newtheorem{definition}[theorem]{Definition}
\newtheorem{example}[theorem]{Example}
\newcommand{\C}{{\mathbb{C}}}
\newcommand{\Z}{{\mathbb{Z}}}
\newcommand{\R}{{\mathbb{R}}}
\newcommand{\T}{{\mathbb{T}}}
\renewcommand{\P}{{\mathbb{P}}}
\newcommand{\calP}{{\mathcal{P}}}
\newcommand{\KD}{{K_{\mathrm{D}}}}
\definecolor{blue-violet}{rgb}{0.54, 0.17, 0.89}
\definecolor{ao(english)}{rgb}{0.0, 0.5, 0.0}
\definecolor{awesome}{rgb}{1.0, 0.13, 0.32}
\DeclareMathOperator{\Stab}{Stab}
\DeclareMathOperator{\Hom}{Hom}
\DeclareMathOperator{\Tor}{Tor}
\DeclareMathOperator{\rank}{rank}
\DeclareMathOperator{\im}{im}
\DeclareMathOperator{\lcm}{lcm}
\DeclareMathOperator{\Ext}{Ext_\Z^1}
\DeclareMathOperator{\coker}{coker}
\DeclareMathOperator{\stab}{Stab}
\newcommand{\dual}{\star}             
\newcommand{\DG}{\mathrm{DG}}         
\begin{document}

\title[Inertia groups of a toric DM stack, fake weighted projective stacks, and labelled sheared simplices]{Inertia groups of a toric Deligne-Mumford stack, fake weighted projective stacks, and labelled sheared simplices}

\author{Rebecca Goldin}
\address{Mathematical Sciences MS 3F2 \\
 George Mason University\\
 4400 University Drive \\
Fairfax, VA 22030 \\ USA}
\email{rgoldin@math.gmu.edu}
\urladdr{\url{http://math.gmu.edu/~rgoldin/}}

\author{Megumi Harada}
\address{Department of Mathematics and
Statistics\\ McMaster University\\ 1280 Main Street West\\ Hamilton, Ontario
L8S4K1\\ Canada}
\email{Megumi.Harada@math.mcmaster.ca}
\urladdr{\url{http://www.math.mcmaster.ca/Megumi.Harada/}}
\thanks{MH is partially supported by an NSERC Discovery Grant,
an Ontario Ministry of Research
and Innovation Early Researcher Award, and 
a Canada Research Chair (Tier 2) award. RG is partially supported by NSF Disciplinary Grant \#202726.}

\author{David Johannsen}
\address{Mathematical Sciences \\
 George Mason University\\
 4400 University Drive \\
Fairfax, VA 22030 \\ USA}
\email{djohann1@gmu.edu}

\author{Derek Krespki}
\address{Department of Mathematics \\
University of Manitoba\\ 342 Machray Hall \\ Winnipeg, Manitoba R3T 2N2\\ Canada}
\email{Derek.Krepski@umanitoba.ca}
\urladdr{\url{http://server.math.umanitoba.ca/~dkrepski/}}

\keywords{Stacky fan; toric Deligne-Mumford stack; inertia group; weighted projective spaces} 
\subjclass[2010]{Primary: 57R18, 14M25; Secondary: 14L24 }

\date{}

\maketitle

\begin{abstract}  
  This paper determines the inertia groups (isotropy groups) of the
  points of a toric Deligne-Mumford stack $[Z/G]$ (considered over the category of smooth
  manifolds) that is realized from a quotient construction using 
  a stacky fan or stacky polytope. The computation provides an explicit
  correspondence between certain geometric and combinatorial data.  In
  particular, we obtain a computation of 
  the connected component of the identity
  element $G_0 \subset G$ and the component group $G/G_0$ in terms
  of the underlying stacky fan, enabling us to characterize the toric DM stacks which are global quotients. As another application, we obtain a
  characterization of those stacky polytopes that yield stacks
  equivalent to weighted projective stacks and, more generally, to 
  \emph{`fake' weighted projective stacks}.  Finally, 
we illustrate our results in
  detail in the 
  special case of \emph{labelled sheared simplices}, where explicit
  computations can be made in terms of the facet labels. 
\end{abstract}

\section*{Introduction}

Toric varieties have been studied for over 35 years. They provide an elementary but illustrative class of examples in algebraic geometry, while also offering insight into related fields such as integrable systems and combinatorics, where the corresponding combinatorial object is a \emph{fan}.
In their foundational paper \cite{BCS05},
Borisov, Chen, and Smith introduce the notion of a \emph{stacky fan},
the combinatorial data from which one constructs toric
  Deligne-Mumford (DM) stacks, which are the stack-theoretic analogues of classical toric varieties. 
When the corresponding fan is polytopal, classical toric varieties have been studied from the perspectives of both algebraic and symplectic geometry. Similarly, when the underlying fan of a stacky fan is polytopal, a toric DM stack admits a description in the language of 
symplectic geometry via the combinatorial data of a \emph{stacky polytope} introduced by Sakai \cite{Sakai2010}. (In the symplectic-geometric context---and particularly in this manuscript---stacks are considered over  the category $\mathsf{Diff}$ of smooth manifolds.) This subfamily of toric DM stacks can be viewed as a generalization of Lerman and Tolman's toric orbifolds associated to \emph{labelled polytopes} \cite{LT97} (cf. Section \ref{definition:labelled polytope} for details), which are in turn a generalization of the Delzant polytopes that classify symplectic toric manifolds. Thus toric DM stacks are a generalization 
 of smooth toric varieties to not-necessarily-effective orbifold toric varieties (including weighted projective spaces), and
 provide a fertile ground for exploration of stacks via this large class of examples.

The exposition in this article is intended to be accessible to a wide audience, including researchers who are not experts in this area.  The tools we develop allow one to compute concretely 
the isotropy groups of toric DM stacks without reference to much stack-theoretic machinery. Moreover, 
we include many detailed examples illustrating our results.

The mathematical contributions of this manuscript are as follows. 
We first describe in Theorem ~\ref{theorem:isotropy}, Proposition~\ref{prop:concrete isotropy}, and Proposition~\ref{prop:compactisomorphism} an explicit computation of the isotropy groups of toric DM stacks, realized as quotient stacks $[Z/G]$ for appropriate space $Z$ and abelian Lie group $G$, in terms of the combinatorial data (i.e. stacky fan) determining the toric DM stack. 

Secondly, as an application of our 
description of isotropy groups of toric DM stacks, we give a computation of the connected component of the identity element 
$G_0 \subset G$ and the component group $G/G_0$ in terms
  of the underlying stacky fan (Proposition~\ref{prop:stackyfanG0}, Lemma~\ref{le:GmodG0}, Proposition~\ref{eq:explicitGmodG0}). To place this computation into context,  recall that 
a stack is called a \emph{global quotient} if it is equivalent to 
a quotient stack $[M/\Lambda]$ where $\Lambda$ is a finite group acting on a manifold $M$. Stack invariants of global quotients
are simpler to compute than for general stacks. Thus, given a stack $\mathcal{X}$, it is an interesting problem to determine whether or not it is 
a global quotient.
In the case of toric DM stacks, this problem is discussed in \cite{HaradaKrepski:2011}, where it is shown that 
a toric DM stack is a global quotient if and only if the restriction of the $G$-action on $Z$ to the connected component of the identity $G_0 \subset G$ is a free action. Moreover, in this case, one may choose the finite group to be $\Lambda=G/G_0$, acting on the quotient  $M=Z/G_0$, which is indeed a manifold, provided $G_0$ acts freely on $Z$ (so $[Z/G_0]$ is the universal cover, in the sense of stacks, of $[Z/G]$). 
In this manuscript, our computation of isotropy groups leads to a 
characterization of those toric DM stacks that are (stacks equivalent to) global quotients of a finite group action and to a description of its universal cover (cf. Section~\ref{sec:globquot}). 

Our third set of results concern weighted projective stacks (resp. fake weighted projective stacks), which are natural stack-theoretic analogues of the classical weighted projective spaces (resp. fake weighted projective spaces as considered in \cite{Bu02, Ka09}). These form a rich class of 
examples that have been 
studied extensively both as stacks and as orbifolds (e.g. see \cite{BoissiereMannPerroni:2009b, Jiang:2007, Mann:2008} among others). As another application of our computation of isotropy groups,
in Proposition~\ref{prop:WPS} (resp. Proposition \ref{prop:fakeWPS})  
we obtain an exact
  characterization of those stacky polytopes which yield (stacks
  equivalent to) weighted projective stacks (resp. 
  fake weighted projective stacks).  

Finally, in Section~\ref{se:shimplexWPS} we introduce a class of labelled polytopes, which we call \emph{labelled sheared simplices}.  
These are 
labelled simplices with all facets but one lying on coordinate hyperplanes.  In this special case
we  illustrate the aforementioned results concretely in terms of the facet labels.

\bigskip

\noindent {\bf Acknowledgements.} We would like to thank Yael Karshon, Graham Denham and Jim Lawrence for useful discussions on both toric varieties and convex polytopes.

\section{Preliminaries}\label{se:prelim}

In this section we recall some background regarding stacky fans and polytopes
and their associated toric Deligne-Mumford stacks.
We assume some familiarity with stacks, in particular their use in modelling  group actions on manifolds and orbifolds.  
We refer the
reader to e.g. \cite{Fantechi:2001, Edidin:2003} and references
therein for basic definitions and ideas in the theory of stacks. Within the
field of algebraic geometry there
is by now an extensive literature on (algebraic) stacks (see e.g. the
informal guide \cite{Alper-guide}), but in other categories
(e.g. $\mathsf{Diff}$ or $\mathsf{Top}$) the literature continues to develop. 
The present authors learned a great deal from the unpublished
(in-progress) notes 
\cite{BCEFFGK-stacks} as well as \cite{Lerman:2010} and \cite{Metzler03}.

The stacks appearing in this paper are quotient stacks
over $\mathsf{Diff}$  associated to  smooth, proper, locally-free Lie group actions on manifolds.  Such quotient stacks are Deligne-Mumford (i.e. stacks that admit a presentation by a proper \'etale Lie groupoid, cf. \cite[Theorem 2.4]{LermanMalkin2009}) and therefore model smooth orbifolds.  
  Often in the algebraic literature, the term orbifold is used for DM stacks with trivial generic stabilizer (e.g. as in \cite{FantechiMannNironi:2010}), what we shall refer to  as an \emph{effective} orbifold.  

The class of  DM stacks we work with are \emph{toric DM stacks}, arising from the combinatorial data of a \emph{stacky fan} \cite{BCS05}.  Our original motivation was to work instead with \emph{stacky polytopes}, the symplectic counterparts of stacky fans, which give rise  to  \emph{symplectic toric DM stacks} \cite{Sakai2010}.   These offer a modern perspective on the symplectic toric orbifolds of Lerman and Tolman \cite{LT97} constructed from \emph{labelled polytopes} (see Section \ref{definition:labelled polytope} below).  However, since our results do not depend on (or make use of) the symplectic structure that results from this perspective, we choose to work mainly with stacky fans.

\subsection{Stacky fans and polytopes}

Mainly to establish notation, we  briefly recall some basic definitions of the combinatorial data appearing in the above discussion. We use $( - )^\dual$ to denote the functor $\mathrm{Hom}_{\Z}(-,\Z)$ or $\mathrm{Hom}_\R(-,\R)$; it should be clear from context which one is meant.  Let $\T$ denote the group of units $\C^\times$, and $\mu_k \subset \T$ the cyclic group of $k$-th roots of unity.
Let $\{ e_1, \ldots, e_n\}$ be the standard basis vectors in $\Z^n \subset \R^n$.

\begin{definition} \cite{BCS05} \label{definition:stacky fan}
A  \emph{stacky fan} is a triple $(N, \Sigma, \beta)$ consisting of a rank~$d$
finitely generated Abelian group~$N$, a rational simplicial fan $\Sigma$ in
$N\otimes \R$ with rays $\rho_1, \ldots, \rho_n$ and a homomorphism $\beta:\Z^n
\to N$ satisfying:
\begin{enumerate}
\item \label{item:rays span} the rays $\rho_1, \ldots, \rho_n$ span $N\otimes \R$, and
\item \label{item:beta surjects onto rays} for $1\leq j \leq n$, $\beta(e_j)\otimes 1$ is on the ray $\rho_j$.
\end{enumerate}
\end{definition}

Given a polytope $\Delta \subseteq \R^d$, recall that the
corresponding fan $\Sigma = \Sigma(\Delta)$ is obtained by setting the one dimensional cones $\Sigma^{(1)}$ to be the positive rays spanned by the inward-pointing normals to the facets of
$\Delta$; a subset $\sigma$ of these rays is a cone in $\Sigma$
precisely when the corresponding facets intersect nontrivially in
$\Delta$. Observe that under this correspondence, facets intersecting in a vertex of $\Delta$ yield maximal cones (with respect to inclusion) in $\Sigma(\Delta)$.

\begin{definition}\label{definition:stacky polytope} \cite{Sakai2010}
 A \emph{stacky polytope} is a triple $(N, \Delta, \beta)$ consisting of a
rank~$d$ finitely generated Abelian group~$N$, a simple polytope
$\Delta$ in $(N\otimes  \R)^\dual$ with $n$ facets $F_1, \ldots, F_n$ and a
homomorphism $\beta:\Z^n \to N$ satisfying:
\begin{enumerate}
 \item \label{item:finite cokernel} the cokernel of $\beta$ is finite, and
 \item \label{item:beta hits normals} for $1\leq j \leq n$,  $\beta(e_j) \otimes  1$ in
$N\otimes  \R$ is an inward pointing normal to the facet $F_j$.
\end{enumerate}
\end{definition}

Condition \ref{item:beta hits normals} above implies that the polytope $\Delta$ in Definition \ref{definition:stacky polytope} is a rational polytope.  Also, from   the preceding
discussion it follows immediately that the data of a stacky polytope
$(N,\Delta, \beta)$ specifies the data of a stacky fan by the
correspondence $(N,\Delta, \beta) \mapsto (N, \Sigma(\Delta),\beta)$.
Indeed,  $\Delta$ is simple if and only if $\Sigma(\Delta)$ is simplicial.  Moreover, the fan $\Sigma(\Delta)$ is rational by condition  \ref{definition:stacky polytope} (\ref{item:beta hits normals}).  Finally, $(N,\Delta,\beta)$ satisfies 
conditions (\ref{item:finite cokernel}) and
(\ref{item:beta hits normals}) of Definition \ref{definition:stacky polytope} if and only if 
$(N,\Sigma(\Delta),\beta)$ satisfies  conditions (\ref{item:rays span}) and (\ref{item:beta surjects onto rays})
of Definition \ref{definition:stacky fan}.

The extra information encoded in a stacky polytope $(N,\Delta,\beta)$ (compared with the stacky fan $(N,\Sigma(\Delta),\beta)$) results in a symplectic structure on the associated toric DM stack.  
Given a presentation of a rational polytope $\Delta$ as the intersection of half-spaces
\begin{align} \label{equation:polytope is rational}
 \Delta = \bigcap_{i=1}^n \left\{ x \in (N \otimes  \R)^\dual \,|\,  
x( \beta(e_{i})\otimes 1 )  \geq -c_i \right\}
\end{align}
for some $c_i \in \R$ and where each $\beta(e_{i})\otimes 1\in
N \otimes \R$  is the inward pointing normal to the facet $F_i$, the
fan $\Sigma(\Delta)$ only retains the data of the positive ray spanned
by the normals, and not the parameters $c_i$, which encode the symplectic structure on the resulting DM stack (see \cite{Sakai2010} for details).

\bigskip

Recall (as in \cite{BCS05}) that given a stacky fan
$(N,\Sigma,\beta)$, the corresponding DM stack may be constructed as a quotient stack $[Z_\Sigma/G]$ as follows.   As with classical toric varieties, the fan $\Sigma$ determines an ideal 
$$J(\Sigma)=\langle \prod_{\rho_i \not\subset \sigma} z_i \, : \, \sigma \in \Sigma \rangle \subset \C[z_1, \ldots, z_n].$$ 
 Let $Z_\Sigma$ denote the complement $\C^n \smallsetminus V(J(\Sigma))$ of the vanishing locus of $J(\Sigma)$.  
Next, we recall a certain group action on $Z_\Sigma$.

Choose a free resolution
$$
0\to \Z^\ell \stackrel{Q}{\longrightarrow} \Z^{d+\ell} \to N \to 0
$$  
of the $\Z$-module $N$, and let $B:\Z^n \to \Z^{d+\ell}$ be a lift of
$\beta$.  With these choices, define the \emph{dual group} $\DG(\beta) = (\Z^{n+\ell})^\dual/ \im [B\,
Q] ^\dual$ where $[B\,Q]:\Z^{n+\ell}=\Z^{n}\oplus \Z^\ell \to \Z^{d+\ell}$
denotes the map whose restrictions to the first and second summands
are $B$ and $Q$, respectively.  Let $\beta^\vee: (\Z^n)^\dual \to
\DG(\beta)$ be the composition of the inclusion $(\Z^n)^\dual \to
(\Z^{n+\ell})^\dual$ (into the first $n$ coordinates) and the quotient
map $(\Z^{n+\ell})^\dual \to \DG(\beta)$. Applying the functor
$\Hom_\Z(-,\T)$ to $\beta^\vee$  yields a homomorphism
$G:=\Hom_\Z(\DG(\beta),\T) \to \T^n$, which defines a $G-$action
on $\C^n$ that leaves $Z_\Sigma \subset \C^n$ invariant.  Define $\mathcal{X}(N,\Sigma,\beta)=[Z_\Sigma/G]$.  By Proposition 3.2 in \cite{BCS05}, $\mathcal{X}(N,\Sigma,\beta)$ is a  DM stack.  At times, we shall simply use the notation $[Z_\Sigma/G]$ to denote $\mathcal{X}(N,\Sigma,\beta)$.

The above construction was adapted to stacky polytopes by Sakai in \cite{Sakai2010}.   As the reader may verify, the DM stack $\mathcal{X}(N,\Delta,\beta)$ obtained from a stacky polytope is a quotient stack obtained by symplectic reduction
$[\mu^{-1}(\tau)/K]$ where $\mu^{-1}(\tau)\subset Z_{\Sigma(\Delta)} \subset \C^n$ is a certain level set of a moment map $\mu:\C^n \to \mathfrak{k}^\dual$ for a Hamiltonian action of $K:=\Hom_\Z(\DG(\beta),S^1)$ on $\C^n$.  
By \cite[Theorem 24]{Sakai2010}, the quotient stacks $[Z_\Sigma/G]$ and $[\mu^{-1}(\tau)/K]$ are equivalent.

\begin{example}
 \label{ex:GconnNfreeWPS} 
Consider the stacky polytope  $(N, \Delta, \beta)$, with $N= \Z^2$, $\Delta$ the simplex in $\R^2 \cong (N \otimes \R)^*$ given by the convex hull of
$(0,0)$, $(0,1)$ and $(1,0)$,
and $\beta: \Z^3\to N$ given by the matrix
$$
\beta=\begin{bmatrix}
-2 & 3 & 0 \\
-2 & 0 & 5
\end{bmatrix}.
$$

 The corresponding stacky fan $(N,\Sigma,\beta)$ is then given by the same $N$ and $\beta$, and $\Sigma =\Sigma(\Delta)$ the fan dual to $\Delta$ (see Figure \ref{figure:WPS}). 
A convenient way to represent the homomorphism $\beta$ is to use ray or facet labels (see Section \ref{definition:labelled polytope}), as in Figure \ref{figure:WPS}.

\setlength{\unitlength}{2cm}
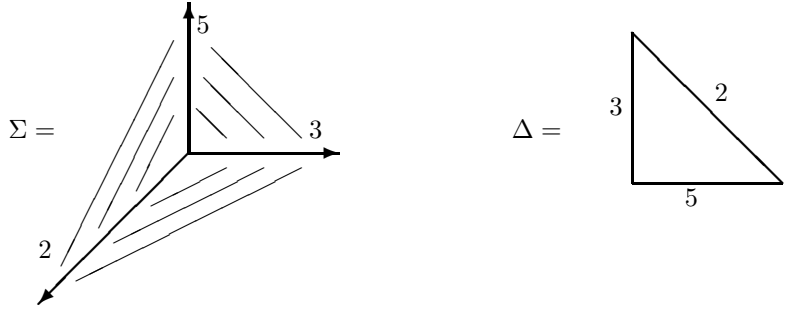
\begin{figure}[h]
\centering
\begin{minipage}{.4\textwidth}
\centering
\begin{picture}(2.125,2.125)(0,-.7)
\put(-.2,.3){$\Sigma=$}
\thicklines
\put(1,.2){\vector(1,0){1}}\put(1.05,1){$5$}
\put(1,.2){\vector(0,1){1}}\put(1.8,.3){$3$}
\put(1,.2){\vector(-1,-1){1}}\put(0,-.5){$2$}
\thinlines
\put(1.25,.3){\line(-1,1){.2}}
\put(1.5,.3){\line(-1,1){.4}}
\put(1.75,.3){\line(-1,1){.6}}
\put(.9, .45){\line(-1,-2){.25}}
\put(.9, .7){\line(-1,-2){.5}}
\put(.9,.95){\line(-1,-2){.75}}
\put(1.25, .1){\line(-2,-1){.5}}
\put(1.5, .1){\line(-2,-1){1}}
\put(1.75, .1){\line(-2,-1){1.5}}
\end{picture} 
\end{minipage}
\begin{minipage}{.4\textwidth}
\centering
\begin{picture}(2.125,2.125)(0,-.7)
\put(-0.2,.3){$\Delta=$}
\thicklines
\put(.6,0){\line(1,0){1}}\put(.95,-.15){$5$}
\put(.6,0){\line(0,1){1}}\put(.45,.45){$3$}
\put(.6,1){\line(1,-1){1}}\put(1.15,.55){$2$}
\end{picture} 
\end{minipage}
\caption{A polytope $\Delta$ and its dual fan $\Sigma=\Sigma(\Delta)$.  The labels on the facets of $\Delta$ (resp. ray generators of $\Sigma$) encode the homomorphism $\beta:\Z^3 \to N$.}
 \label{figure:WPS}
\end{figure}

To compute the corresponding DM stack $[Z_\Sigma/G]$, note that $Z_\Sigma=\C^3 \smallsetminus \{0\}$. We find
$
 \DG(\beta) = (\Z^3)^\dual / \im \beta^\dual \cong \Z,
$
where the isomorphism may be chosen as $\bar{f}([a,b,c]) = 15a+10b+6c$.  Therefore, $G=\Hom(\DG(\beta),\T) \cong \T$.  Since the map $\beta^\vee$ is simply the projection $f:(\Z^3)^\dual \rightarrow \DG(\beta)\cong \Z$, where $f(a,b,c) = 15a+10b+6c$, the homomorphism $G\rightarrow \T^3$ induced by $\beta^\vee$ is then $t\mapsto (t^{15},t^{10},t^{6})$.
It follows that the corresponding stack is equivalent to a weighted projective stack, $\calP(15,10,6)$.
\end{example}

We modify the above example to illustrate the construction for a $\Z$-module $N$ with torsion.

\begin{example} \label{eg:WPSglobalisotropy}
Let $N=\Z^2\oplus \Z/2\Z$, and let $\Sigma$ be the fan in Figure \ref{figure:WPS}. Set $\beta:\Z^3 \to N$ to be $$\beta(x,y,z)=(-2x+3y,-2x+5z,x+y+z \mod 2).$$
As in Example \ref{ex:GconnNfreeWPS}, $Z_\Sigma = \C^3 \smallsetminus\{0\}$.  To compute $G$, we choose the resolution $0 \to \Z \stackrel{Q}{\longrightarrow} \Z^3 \to N \to 0$ with $Q=\begin{bmatrix}
0& 0 & 2
\end{bmatrix}^T$, and choose $B$ so that  
$$
[B\, Q] = \begin{bmatrix}
-2 & 3 & 0 & 0 \\
-2 & 0 & 5 & 0 \\
-1 & 1 & 1 & 2
\end{bmatrix}.
$$
Therefore, $\DG(\beta) = (\Z^4)^\dual / \im [B\, Q]^\dual \cong \Z$, where the isomorphism can be chosen as $\bar{f}([a,b,c,d])=30a+20b+12c-d$.  Therefore, $G\cong \T$, and $\beta^\vee:(\Z^3)^\dual \to \DG(\beta)\cong \Z$ is given by $f(a,b,c)=30a+20b+12c$.  Hence the homomorphism $G\to \T^3$ describing the $G$-action on $Z_\Sigma$ is $t\mapsto (t^{30},t^{20},t^{12})$.  It follows that the corresponding stack is equivalent to a weighted projective stack $\calP(30,20,12)$, which has global stabilizer $\mu_2$.
\end{example}

\subsection{Relation with Delzant's construction and labelled polytopes}
\label{definition:labelled polytope}

The construction of the quotient stack $[\mu^{-1}(\tau)/K]$ from a stacky polytope may be viewed as a generalization of Lerman-Tolman's generalization \cite{LT97} of the Delzant construction, which we review next.

In its original form \cite{LT97},  a \emph{labelled polytope} is a pair $(\Delta,
\{m_i\}_{i=1}^n)$ consisting of a convex simple polytope $\Delta$ in $V^\dual$, where $V$ is a real vector space,  
 with $n$ facets $F_1, \ldots, F_n$ whose relative interiors are
labelled with positive integers $m_1, \ldots, m_n$. The polytope is assumed to be rational with respect to a chosen lattice $N\subset V$.
Identifying $V\cong N\otimes \R$, we may denote  the primitive inward pointing normals  by $\nu_1\otimes 1, \ldots, \nu_n\otimes 1$. Then defining
$\beta:\Z^n \to N$ by the formula   $\beta(e_i) = m_i \nu_i$ realizes $(N,\Delta,\beta)$ as a stacky polytope. Furthermore, any stacky polytope with $N$ free can be realized as a labelled polytope. Thus  labelled polytopes are precisely those
stacky polytopes for which the $\Z$-module $N$ is  a free
module.

Given a labelled polytope $(\Delta, \{ m_i \}_{i=1}^{n})$ in $(N\otimes \R)^\dual \cong(\R^d)^\dual$, we may proceed with the Delzant construction to obtain a quotient stack $[\mu^{-1}(\tau)/\KD]$ as a symplectic reduction, where the group $\KD \subset (S^1)^n$ acts via the standard linear $(S^1)^n$-action on $\C^n$.  As we shall see below  $K_D$ is isomorphic to the group $K=\Hom(\DG(\beta),S^1)$ arising from Sakai's construction.  Furthermore, it is straightforward to verify that the isomorphism is compatible with the respective group actions of $K_D$ and $K$ on $\C^n$, and thus the symplectic quotient stacks $[\mu^{-1}(\tau)/K_D]$ and $[\mu^{-1}(\tau)/K]$ are equivalent.

The group $\KD$ in the Delzant construction is defined as follows. Let $\beta:\Z^n \to N$ be given by $\beta(e_i)=m_i\nu_i$, where $m_i\nu_i$ are the weighted normals to the facets of $\Delta$, and consider the resulting homomorphism $\bar\beta:(S^1)^n \to (S^1)^d$ induced by $\beta_\R=\beta\otimes \R$ (where we have chosen  identifications $N\cong \Z^d$, $S^1\cong \R/\Z$). Define $\KD =\ker \bar\beta$.

To compare the groups $\KD$ and $K$, we note that since $N$ is free it is easy to verify that $\DG(\beta)=\coker \beta^\dual$, and hence we have the short exact sequence
$$
0 \to N^\dual \to (\Z^n)^\dual \to \DG(\beta) \to 0
$$
which yields the short exact sequence,
$$
1 \to K \to \Hom((\Z^n)^\dual,S^1) \to \Hom(N^\dual,S^1) \to 1.
$$
Using  the natural isomorphism $\Hom(M^\dual,S^1) \cong M\otimes S^1$ (for a free $\Z$-module $M$) and identifications 
$$
\Z^n\otimes S^1\cong (S^1)^n \quad \text{and} \quad N\otimes S^1\cong (S^1)^d
$$ resulting from the chosen identification $N\cong \Z^d$, we may readily identify $K\cong \KD$. 

\begin{example}
\label{eg:segment}
Let $N=\Z$. Consider the labelled polytope $\Delta$ in $\R\cong (N\otimes \R)^\dual $ consisting of a line segment with labels $r$ and $s$ at each endpoint (see Figure \ref{fig:segment}). 
\setlength{\unitlength}{1mm}
\begin{figure}[h]
\begin{picture}(15,7)
\thicklines
\put(10,2){\line(1,0){10}}\put(7,2){$r$} \put(22,2){$s$}
\put(10,2){\circle*{1}}\put(20,2){\circle*{1}}
\end{picture}
\caption{A labelled polytope $\Delta$ in $\R$.}
\label{fig:segment}
\end{figure}
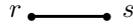

The homomorphism $\beta:\Z^2 \to N$ in the corresponding stacky polytope (and stacky fan) is given by the matrix $\beta= \begin{bmatrix}
-s & r
\end{bmatrix}$. Let $g=\gcd(r,s)$.  Then $\DG(\beta)=(\Z^2)^\dual / \im \beta^\dual \cong \Z \oplus \Z /g\Z$, which may be realized by the isomorphism $\bar{f}([a,b])=(\frac{r}{g}a+\frac{s}{g}b, -ya+xb \mod g)$ where $x$ and $y$ are integers satisfying $g=xr+ys$.  It follows that $G\cong \T \times \mu_g$, where $\mu_g \subset \T$ is the cyclic group of $g$-th roots of unity.  Note that $G$ is connected if and only if $\gcd(r,s)=1$.  

Under the above identification, the homomorphism $\beta^\vee:(\Z^2)^\dual \to \DG(\beta) \cong \Z \oplus \Z/g\Z$ is simply the projection $f(a,b)=(\frac{r}{g}a+\frac{s}{g}b, -ya+xb \mod g\Z)$.  Therefore, the homomorphism $\T\times \mu_g \cong G\to \T^2$ that determines the action on $Z_\Sigma =\C^2 \smallsetminus \{0\}$ is then given by $(t,\xi_g^k) \mapsto (t^{\frac{r}{g}} \xi_g^{-ky},t^{\frac{s}{g}} \xi_g^{kx})$ where $\xi_g \in \C$ denotes a primitive $g$-th root of unity.
\end{example}

\section{Isotropy and Stacky Fans}\label{se:isotropy}

Our goal in this section is to compute the local isotropy group of each point of a toric DM stack $\mathcal{X}(N,\Sigma,\beta)=[Z_{\Sigma}/G]$ by computing the subgroup $\stab(z) \subset G$ that fixes a given point $z\in Z_{\Sigma}$.  The main result, Theorem \ref{theorem:isotropy} in Section \ref{sec:generalisotropy}, describes all possible isotropy groups that arise.  A discussion of the connected component of $G$ and its role in detecting global quotient stacks appears in \ref{sec:globquot}, along with further details in \ref{sec:Nfree} for the case of labelled polytopes.

\subsection{Isotropy and stacky fans} \label{sec:generalisotropy}

Recall that $Z_\Sigma$ is defined as the complement in $\C^n$ of the zero-set of the ideal 
$
J(\Sigma) 
$, which is described in more detail next.

For $\sigma\in \Sigma,$ let $I_\sigma = \{i: \rho_i\subset \sigma\}$, and $J_\sigma$ its complement.  Then 
\begin{align*}
V(J(\Sigma)) &= \bigcap_{\sigma\in\Sigma} \{(z_1,\dots, z_n) \mid \prod_{\rho_i\not\subset \sigma} z_i = 0\} 
\mbox{ implying}\\
Z_{\Sigma}=\C^n\smallsetminus V(J(\Sigma)) &=\bigcup_{\sigma\in\Sigma} \{(z_1,\dots, z_n) \mid z_i \neq 0 \mbox{ whenever }i\in J_\sigma\}\\
& = \bigcup_{\sigma \in \Sigma} \{(z_1,\dots, z_n) \mid I_{z}\subset I_\sigma \}
\end{align*}
where $I_z= \{i \mid z_i=0\}$.

There is a natural decomposition of  $Z_\Sigma$ since an inclusion of cones $\sigma'\subset \sigma$ yields an inclusion $Z_{\sigma}\subset Z_{\sigma'}$, where $Z_\sigma :=\{(z_1,\dots, z_n): I_{z} \subset I_\sigma\}$. Furthermore, for any $z=(z_1,\dots, z_n)\in Z_\sigma$, there is a cone $\sigma_z\subset \sigma$ given by the span of  the minimal generators of the rays $\rho_i$ with $i\in I_z$. This follows from our assumption that $\Sigma$ is simplicial. Since the number of rays $\rho_i$ with $i\in I_\sigma$ equals the dimension of $\sigma$, any subset of these rays spans a face of $\sigma$ and is thus in the fan $\Sigma$. It follows, then, that for every point $z\in Z_\Sigma$, we may write $z\in Z_{\sigma_z}$ where the cone $\sigma_z$ satisfies $\{i: z_i=0\} = I_{\sigma_z}$.  Moreover, $\sigma_z$ is \emph{minimal} in the sense that $\sigma_z \subset \sigma$ for any $\sigma$ such that $z\in Z_\sigma$.

For a point $z \in \C^n$, the subgroup  in $\T^n$  fixing $z$ is $\{(t_1,\dots, t_n): t_i=1 \mbox{ if } z_i\neq 0\}$, which motivates the following definition. For any subset $I\subset \{1,\dots, n\}$ and its complement $J$, let
$$
\T^I = \{(t_1, \ldots, t_n) : i \in J \mbox{ implies } t_i =1\} \subseteq \T^n.
$$
Note that $\T^{I}$ is the kernel of the map $\T^n \rightarrow \T^{|J|}$ given by projection onto the coordinates indicated by $J$ with cardinality $|J|$. 

Since $G$ acts on $Z_\Sigma$ via the homomorphism $G\to \T^n$ induced by $\beta^\vee$,  then the isotropy $\stab(z)$ associated to a point $z\in Z_\Sigma$ is given by the kernel of the map 
\begin{equation}
\label{eq:kernel}
G \longrightarrow \T^n \overset{\pi}\longrightarrow \T^{|J_z|} 
\end{equation}
where $J_z$ is the complement of $I_z$.  At times it is useful to view the subset $I_z$ as $I_\sigma$ for the cone $\sigma=\sigma_z$ in $\Sigma$ described above, in which case we denote $\stab(z)$ by $\Gamma_\sigma$.

\begin{remark} \label{remark:trivial isotropy and max cones}
  Notice that an inclusion of cones $\sigma'  \subset \sigma$ in $\Sigma$ induces an inclusion $J_{\sigma} \subset J_{\sigma'}$ and hence the following commutative diagram,
$$
\xymatrix{
{\T^n} \ar[dr] \ar[r] & {\T^{|J_{\sigma'}|}} \ar[d] \\
& {\T^{|J_\sigma|}}
}
$$
where the vertical map is the natural projection.
Therefore, there is a natural inclusion of isotropy groups $\Gamma_{\sigma'} \subset \Gamma_\sigma$.  It follows each such isotropy group is contained in $\Gamma_\sigma$ for some maximal cone $\sigma$.  In particular, all isotropy groups are trivial if and only if $\Gamma_\sigma$ is trivial for maximal cones $\sigma$ in $\Sigma$.
\end{remark}

\begin{theorem} \label{theorem:isotropy}
Let $(N,\Sigma,\beta)$ be a stacky fan and $[Z_\Sigma/G]$ its corresponding toric DM stack.  For a point $z~=~(z_1, \ldots, z_n)$ in $Z_\Sigma \subset \C^n$, let $N_z \subset N$ denote the submodule generated by $\{\beta(e_i) \mid z_i=0\}$. The isotropy group $\stab(z)$ is  isomorphic to $\Hom(\Ext(N/N_z,\Z),\T)$; therefore, $\stab(z)$  is (non-canonically) isomorphic to the torsion submodule $\Tor(N/N_z)$.
\end{theorem}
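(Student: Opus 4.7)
The plan is to identify $\stab(z)$ as $\Hom_\Z$ of a certain cokernel, and then to recognize that cokernel as $\Ext(N/N_z,\Z)$. Starting from~\eqref{eq:kernel}, I would observe that the composition $G \to \T^n \to \T^{|J_z|}$ is obtained by applying $\Hom_\Z(-,\T)$ to the single map
$$
\phi : \Z^{J_z} \hookrightarrow (\Z^n)^\dual \xrightarrow{\beta^\vee} \DG(\beta),
$$
where the first arrow is inclusion of the coordinates indexed by $J_z$. Because $\T$ is divisible and hence injective as an abelian group, $\Hom_\Z(-,\T)$ is exact on finitely generated abelian groups, so
$$
\stab(z) \;\cong\; \Hom_\Z(\coker\phi,\,\T),
$$
and the problem reduces to computing $\coker\phi$.

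Next, I would unpack $\DG(\beta) = (\Z^{n+\ell})^\dual/\im[B\,Q]^\dual$ and factor $\phi$ through $(\Z^n)^\dual \hookrightarrow (\Z^{n+\ell})^\dual$; quotienting out the $\Z^{J_z}$ summand first identifies
$$
\coker\phi \;\cong\; (\Z^{I_z} \oplus \Z^\ell)^\dual \,/\, \im[B_I\,Q]^\dual,
$$
where $B_I := B|_{\Z^{I_z}}$. I would then aim to show that the two-term complex
$$
0 \to \Z^{I_z} \oplus \Z^\ell \xrightarrow{[B_I\,Q]} \Z^{d+\ell} \longrightarrow N/N_z \to 0
$$
is a length-two free resolution of $N/N_z$. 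Exactness at $\Z^{d+\ell}$ and at $N/N_z$ is immediate, since the image of $[B_I\,Q]$ is $B(\Z^{I_z})+\im Q$, which projects onto $N_z \subset N$. Granting this, applying $\Hom_\Z(-,\Z)$ produces $\Ext(N/N_z,\Z) \cong \coker[B_I\,Q]^\dual \cong \coker\phi$, establishing the first isomorphism of the theorem. The second isomorphism, $\Hom_\Z(\Ext(N/N_z,\Z),\T) \cong \Tor(N/N_z)$, then follows (non-canonically) from decomposing $N/N_z$ into free and torsion summands together with the standard identifications $\Ext(\Z,\Z)=0$, $\Ext(\Z/k,\Z)=\Z/k$, and $\Hom_\Z(\Z/k,\T)\cong\mu_k$.

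The main obstacle will be verifying the injectivity of $[B_I\,Q]$, without which the displayed complex is not a free resolution. This is where the hypothesis that $\Sigma$ is simplicial must enter, and I expect it to be the crux of the argument. Suppose $B_I v + Qw = 0$ for some $(v,w) \in \Z^{I_z} \oplus \Z^\ell$; projecting to $N$ gives $\beta(v)=0$. Since $z \in Z_\Sigma$ forces $I_z \subseteq I_{\sigma_z}$ as in the discussion preceding the theorem, the vectors $\{\beta(e_i)\otimes 1 : i \in I_z\}$ lie on the rays of the simplicial cone $\sigma_z$ and are therefore $\R$-linearly independent in $N \otimes \R$; tensoring $\beta(v)=0$ with $\R$ then forces $v=0$, and injectivity of $Q$ (as part of the chosen free resolution of $N$) yields $w=0$. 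The remaining homological bookkeeping is routine.
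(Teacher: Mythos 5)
Your argument is correct, and it shares the paper's skeleton: both proofs identify $\stab(z)$ with $\Hom(\coker f,\T)$ for the composite $f=\beta^\vee\circ\pi^\dual$ (using injectivity of $\T$ as a $\Z$-module), both then show $\coker f\cong\Ext(N/N_z,\Z)$, and both invoke simpliciality of $\Sigma$ at the same spot, namely the $\R$-linear independence of $\{\beta(e_i)\otimes 1\}_{i\in I_z}$ in $N\otimes\R$. The genuine difference is in the middle step. The paper first proves that $\beta_\sigma:\Z^{I_\sigma}\to N_\sigma$ is an isomorphism (so $\DG(\beta_\sigma)$ is trivial), then appeals to Lemma~2.3 and the exact sequence (2.0.3) of \cite{BCS05} applied to the induced map $\beta_J:\Z^{|J_\sigma|}\to N/N_\sigma$ to read off $\coker f\cong\Ext(N/N_\sigma,\Z)$. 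You instead present $\coker f$ directly as $(\Z^{I_z}\oplus\Z^\ell)^\dual/\im[B_I\,Q]^\dual$ and exhibit $0\to\Z^{I_z}\oplus\Z^\ell\xrightarrow{[B_I\,Q]}\Z^{d+\ell}\to N/N_z\to 0$ as a free resolution, so that $\Ext(N/N_z,\Z)$ falls out of the definition. Your route is more self-contained, bypassing the $\DG$-functoriality machinery entirely, at the cost of the injectivity check for $[B_I\,Q]$ --- which you correctly flag as the crux and handle by the same linear-independence argument that the paper uses to show $\beta_\sigma$ is an isomorphism; the two facts are equivalent in substance. The identification of the projection of $\im[B\,Q]^\dual$ with $\im[B_I\,Q]^\dual$ and the exactness checks at $\Z^{d+\ell}$ and $N/N_z$ are routine, as you say, and the final passage to $\Tor(N/N_z)$ is standard. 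No gaps.
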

\begin{proof}
Let $\sigma$ be a cone in $\Sigma$ with $I_z=I_\sigma$.
We noted above that the stabilizer of $z$ is given by  $\Gamma_\sigma$, the kernel of the composition \eqref{eq:kernel}.
 This composition is realized by applying the functor $\Hom(-,\T)$ to the composition $f=\beta^\vee\circ\pi^\dual$
$$
(\Z^{|J_\sigma|})^\dual\overset{\pi^\dual}\longrightarrow  (\Z^n)^\dual \overset{\beta^\vee}\longrightarrow \DG(\beta),
$$
where $\pi^\dual$ is inclusion of the relevant factors. Moreover, since $\T$ is injective as a $\Z$-module,  the kernel of \eqref{eq:kernel} is $\Hom(\coker f,\T)$.  As we shall see next, 
 $\coker f \cong \Ext(N/N_\sigma,\Z)$, where $N_\sigma = N_z \subset N$ denotes the subgroup generated by $\{\beta(e_i) \mid i \in I_\sigma\}$.

Let $\beta_\sigma:\Z^{I_\sigma} \to N_\sigma$ denote the restriction of $\beta$ to $\Z^{I_\sigma}$ together with its codomain.
 We claim that $\beta_\sigma: \Z^{I_\sigma} \to N_\sigma$ is an isomorphism, and hence $N_\sigma$ is free.
That $\Sigma$ is simplicial means that the $\{\beta(e_i) \otimes 1\}_{i\in I_\sigma}$ are linearly independent in $N\otimes \R$.  Therefore, $\rank N_\sigma = |I_\sigma|$.  Since $\beta_\sigma$ is a surjective homomorphism of modules of the same rank, $\beta_\sigma \otimes \R$ is an isomorphism of vector spaces.  But since the domain $\Z^{I_\sigma}$ of $\beta_\sigma$  is free,  $\beta_\sigma$ must be injective as well. This verifies the claim.

In particular, this implies that $\DG(\beta_\sigma)$ is trivial.  Any lift $B_\sigma$ of $\beta_\sigma$ is an isomorphism and 
$N_\sigma$ has no torsion, so $\DG(\beta_\sigma)=\coker[B_\sigma]^\dual=\coker \beta_\sigma^\dual$.

Consider the following diagram, whose rows are exact.
$$
\xymatrix{
0 \ar[r] & \Z^{|I_\sigma|} \ar[r] \ar[d]_{\beta_\sigma} & \Z^n \ar[r] \ar[d] _{\beta} &  \Z^{|J_\sigma|} \ar[r] \ar[d]_{\beta_J} &  0\\
0 \ar[r] & N_\sigma \ar[r] & N \ar[r] & N/N_\sigma \ar[r] & 0
}
$$
By Lemma 2.3 in \cite{BCS05}, we get the following commutative diagram with exact rows, noting that $\DG(\beta_\sigma)$ is trivial.
$$
\xymatrix{
 0 \ar[r] &  (\Z^{|J_\sigma|})^\dual \ar^{\pi^\dual}[r] \ar[d] &  (\Z^n)^\dual \ar[r] \ar[d]_{\beta^\vee} & (\Z^{|I_\sigma|})^\dual \ar[r] \ar[d] &  0 \\
 0\ar[r] & \DG(\beta_J) \ar[r]^{\cong} &  \DG(\beta) \ar[r] & 0 &
}
$$
We identify $f=\beta^\vee\circ\pi^\dual$ with the left vertical arrow.

Applying the exact sequence (2.0.3) from \cite{BCS05} to $\beta_J:\Z^{|J_\sigma|} \to N/N_\sigma$, we get
$$
(N/N_\sigma)^\dual \to (\Z^{|J_\sigma|})^\dual \to \DG(\beta_J) \to \Ext(N/N_\sigma,\Z) \to 0
$$
whence $\coker(f)\cong \Ext(N/N_\sigma,\Z)$, as required.
Since $\Ext(N/N_\sigma,\Z)$ is (non-canonically) isomorphic to (the finite abelian group) $\Tor(N/N_\sigma)$, it follows that $\Hom(\Ext(N/N_\sigma,\Z),\T)$ is isomorphic to $\Tor(N/N_\sigma)$. 

\end{proof}

\begin{example} \label{eg:WPSisotropycalc}
Consider the toric DM stack from Example~\ref{ex:GconnNfreeWPS}. We  compute the isotropy for the points of the form $z=(0,a,0)$ and $w=(0,a,b)$ in $Z_\Sigma$ with $a,b \neq 0$.  Since $I_{z}=\{1,3\}$, then $N_z \subset N$ is the subgroup generated by $(0,5)$ and $(-2,-2)$.  Therefore, $N/N_z \cong \Z/5\Z \oplus \Z/2\Z$, and $\stab(z)\cong\Tor(N/N_z)\cong \mu_{10}$.

Since $I_{w}=\{1\}$, then $N_w$ is the subgroup generated by $(-2,-2)$, and $N/N_w \cong \Z\oplus \Z/ 2\Z$.  Therefore, $\stab(w) \cong \Tor(N/N_w) \cong \mu_{2}$.  Note that the isotropy for $w$ can simply be read off from the corresponding facet label (see Figure \ref{figure:WPS}) in this case. For higher dimensional cones, a more detailed analysis is required---e.g. see Section~\ref{se:2dimsheared}.
\end{example}

The proof of Theorem~\ref{theorem:isotropy} does not show explicitly how $\Tor(N/N_z)$ may be viewed as a subgroup of $G$.   
Our next goal is Proposition~\ref{prop:concrete isotropy} which gives an explicit 
identification of the stabilizer group and $\Tor(N/N_z)$. To accomplish this, we first construct a map $\Tor(N/N_z) \to G$ \eqref{concrete inclusion of isotropy}.  (See Proposition \ref{prop:compactisomorphism} for a more direct approach in the case that $N$ is free.)
For any cone $\sigma$ in $\Sigma$, let $N_\sigma$ be the subgroup generated by $\{\beta(e_i) \mid i \in I_\sigma\}$.  We define a map
\begin{equation} \label{concrete inclusion of isotropy}
\gamma_\sigma:\Tor(N/N_\sigma) \to \Hom(\DG(\beta),\T)
\end{equation}
that depends on a choice of resolution
\begin{equation} \label{R-resolution}
0\longrightarrow {\Z^\ell} \stackrel{R}{\longrightarrow}  {\Z^{d-I+\ell}} \stackrel{q}{\longrightarrow}  N/N_\sigma \longrightarrow  0,
\end{equation}
and a lift $\tilde{q}:\Z^{d-I+\ell} \to N$ (cf.  \cite[Lemma 2.2.8]{Weibel94}), where $I=|I_\sigma|$.  Given $x\in \Tor(N/N_\sigma)$, choose a representative $a$ in $\Z^{d-I+\ell}$.  We shall define a homomorphism $\phi_a(\Z^{n+\ell})^*\rightarrow \C$ that descends to a homomorphism $\overline{\phi}_a:\DG(\beta) \rightarrow \C/\Z\cong \T$, and set $\gamma_\sigma(x)=\overline{\phi}_a$.

Set $J=n-I$, and write elements of $(\Z^{n+\ell})^\dual\cong (\Z^I)^\dual\oplus (\Z^J)^\dual\oplus(\Z^\ell)^\dual$ as triples $(u_I,u_J,v)$.  Define $\phi_a(u_I,u_J,v) = u_I(b) + v(c)$, where $b$ is the unique element in $\Z^I\otimes \C$ satisfying $(\beta_\sigma)_\C(b) =\tilde{q}_\C(a)$ and   $c$ is the unique element in $\Z^\ell\otimes \C$ satisfying $R_\C(c)=a$.  

To verify that $\phi_a$ descends to a homomorphism in $\Hom(\DG(\beta),\T)$, we choose the resolution 
$$
0 \longrightarrow \Z^\ell \stackrel{Q}{\longrightarrow} \Z^{d+\ell} \longrightarrow N \longrightarrow 0
$$
 with $Qv=(-\beta_\sigma^{-1} \tilde{q} R v,R v )$, and a lift $B:\Z^n \to {\Z^I \oplus \Z^{d-I+\ell}}$ satisfying $B(b,0)=a-Q(c)$, where $(b,0) \in  \Z^n\cong \Z^I\oplus \Z^{J}$.  (Such a lift $B$ can be obtained by first choosing a lift $B_J:\Z^J \to \Z^{d-I+\ell}$ and setting $B(z_I,z_J) =  (z_I + \beta_\sigma^{-1}(\beta(0,z_J)-\tilde{q} B_Jz_J),B_J z_J)$.) To see that  $\phi_a \circ [B\, Q]^\dual$ has image in $\Z \subset \C$, note that for $\begin{bmatrix} u_I & w & v \end{bmatrix}$ in $(\Z^{d+\ell})^\dual\cong(\Z^I)^\dual\oplus (\Z^{d-I})^\dual \oplus (\Z^{\ell})^\dual$,
\begin{align*}
\phi_a [B\, Q]^\dual \begin{bmatrix} u_I & w & v \end{bmatrix} 
&= \begin{bmatrix} u_I & w & v \end{bmatrix}[B\, Q] \begin{bmatrix}
b \\ 0 \\ c
\end{bmatrix} \\
&=  \begin{bmatrix} u_I & w & v \end{bmatrix} \left( B(b,0) + Q(c) 
 \right) \\
 &=v(a) \in \Z.
\end{align*}
 Notice that a different choice of representative $a'=a+R  w$ ($w\in \Z^\ell$) for $x$ leads to a homomorphism $\phi_{a'}$ that differs from $\phi_a$ by an integer-valued function $\phi_{R w}$; therefore, $\gamma_\sigma$ in  \eqref{concrete inclusion of isotropy} is well-defined.

\begin{proposition} \label{prop:concrete isotropy}
 The homomorphism $\gamma_\sigma$ in \eqref{concrete inclusion of isotropy} induces an isomorphism $\Tor(N/N_\sigma) \cong \Gamma_\sigma$.
\end{proposition}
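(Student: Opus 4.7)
The plan is to establish the proposition in three stages: verify that $\gamma_\sigma$ is a well-defined group homomorphism whose image lies inside $\Gamma_\sigma$, prove injectivity of $\gamma_\sigma$ by a direct integrality argument, and then invoke Theorem~\ref{theorem:isotropy} to conclude that $\gamma_\sigma$ is an isomorphism on the grounds that both groups are finite of the same order.

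The homomorphism property follows from $\Z$-linearity of the constructions $a \mapsto b$ and $a \mapsto c$: since $(\beta_\sigma)_\C$ is invertible (as shown in the proof of Theorem~\ref{theorem:isotropy}, $\beta_\sigma$ is an isomorphism of free $\Z$-modules of equal rank) and $R_\C$ is injective, the elements $b \in \Z^I \otimes \C$ and $c \in \Z^\ell \otimes \C$ depend linearly on $a$, so $\phi_{a+a'} = \phi_a + \phi_{a'}$ and $\overline{\phi}_{a+a'} = \overline{\phi}_a + \overline{\phi}_{a'}$. To see that $\gamma_\sigma(x)$ lies in $\Gamma_\sigma$, recall that $\Gamma_\sigma$ is the kernel of the homomorphism $G \to \T^{|J_\sigma|}$ induced by $f = \beta^\vee \circ \pi^\dual$. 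For any $u_J \in (\Z^J)^\dual$, the element $\pi^\dual(u_J) \in (\Z^n)^\dual$ lifts to $(0, u_J, 0) \in (\Z^{n+\ell})^\dual$, and by definition $\phi_a(0, u_J, 0) = 0 \cdot b + 0 \cdot c = 0$. Hence $\overline{\phi}_a$ annihilates $\im f$, so $\gamma_\sigma(x) \in \Gamma_\sigma$.

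The crux is injectivity. Suppose $\gamma_\sigma(x) = 0$ in $\Hom(\DG(\beta), \T)$. Then $\phi_a \colon (\Z^{n+\ell})^\dual \to \C$ takes values in $\Z$. Evaluating on elements of the form $(u_I, 0, 0)$ for arbitrary $u_I \in (\Z^I)^\dual$ yields $u_I(b) \in \Z$, which by the perfect pairing $(\Z^I)^\dual \times (\Z^I \otimes \C) \to \C$ forces $b \in \Z^I$. Similarly, evaluating on $(0, 0, v)$ for arbitrary $v \in (\Z^\ell)^\dual$ forces $c \in \Z^\ell$. Then $a = R_\C(c) = R(c) \in \im R$, and since $\im R = \ker q$ from the resolution \eqref{R-resolution}, it follows that $x = q(a) = 0$ in $\Tor(N/N_\sigma)$.

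Finally, Theorem~\ref{theorem:isotropy} shows that $|\Gamma_\sigma| = |\Tor(N/N_\sigma)|$ as finite abelian groups, so the injective homomorphism $\gamma_\sigma$ must be an isomorphism. The main obstacle is the book-keeping between the resolution \eqref{R-resolution} of $N/N_\sigma$, the resolution of $N$ chosen in defining $\DG(\beta)$, and the various lifts $\tilde q$ and $B$; once the well-definedness of $\phi_a$ on $\DG(\beta)$ is accepted (as carried out in the construction of $\gamma_\sigma$), the rest of the argument reduces to the observation that integrality of $\phi_a$ on all integer inputs forces the complex-valued auxiliary elements $b$ and $c$ to be integral, which immediately trivializes $x$.
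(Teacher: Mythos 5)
Your proposal is correct and follows essentially the same route as the paper's proof: image in $\Gamma_\sigma$ via the vanishing of $\phi_a$ on elements of the form $(0,u_J,0)$, injectivity via integrality of $\phi_a$ forcing $c\in\Z^\ell$ and hence $x=q(a)=0$, and surjectivity from the abstract identification of $|\Gamma_\sigma|$ with $|\Tor(N/N_\sigma)|$ supplied by Theorem~\ref{theorem:isotropy}. The extra checks you include (linearity in $a$, integrality of $b$) are harmless elaborations of what the paper leaves implicit.
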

\begin{proof}
We verify that $\gamma_\sigma$ induces an isomorphism $\Tor(N/N_\sigma) \cong \Gamma_\sigma$, by checking that $\gamma_\sigma$ is injective and that the composition 
$$
\Tor(N/N_\sigma) \stackrel{\gamma_\sigma}{\longrightarrow} G \to {\T}^n \to {\T}^J
$$
is trivial. If $\gamma_\sigma(x) = 0$, then $\phi_a$ is integer-valued, whence the corresponding element $c\in \C^\ell$ must actually lie in $\Z^\ell$ and thus $x={q}(a)=0$.  Therefore, $\gamma_\sigma$ is injective. Finally, the last two maps in the above composition are obtained by pulling back a homomorphism $\DG(\beta) \to {\T}$ along the quotient map $(\Z^{I+J+\ell})^\dual \to \DG(\beta)$ and the composite $(\Z^J)^\dual \to (\Z^{I+J})^\dual \to (\Z^{I+J+\ell})^\dual$ of inclusions.  By definition of $\gamma_\sigma$, this pullback is trivial, since $\phi_a$ is trivial on elements of the form $\begin{bmatrix}
0 & u_J & 0
\end{bmatrix}
$.
\end{proof}

\begin{remark} \label{remark:SNF}
In practice, the isotropy groups in Theorem \ref{theorem:isotropy} may be computed using the Smith Normal Form of a matrix, as we outline next.  As in the proof of the theorem, the isotropy group $\Gamma_\sigma$ is isomorphic to the torsion subgroup of the cokernel of the composition $\Z^{|I_\sigma|} \to \Z^n \stackrel{\beta}{\longrightarrow} N$. As in the discussion following Definition \ref{definition:stacky fan}, choose a free resolution 
$$
0 \to \Z^\ell \stackrel{Q}{\longrightarrow} \Z^{d+\ell} \to N
$$ 
and a lift $B:\Z^n \to \Z^{d+\ell}$ of $\beta$, and let $B_\sigma$ denote the restriction of $B$ to $\Z^{|I_\sigma|}$.  Then the commutative diagram
$$
\xymatrix{
0 \ar[r] & {\Z^\ell} \ar[r] \ar@{=}[d] & {\Z^{|I_\sigma|+\ell}} \ar[r] \ar[d]_{[{B_\sigma \, Q]}} & {\Z^{|I_\sigma|}} \ar[d]_{{\beta_\sigma}} \ar[r] & 0 \\
0 \ar[r] & {\Z^\ell} \ar[r]^{Q} & {\Z^{d+\ell}} \ar[r] & N \ar[r] & 0
}
$$
of short exact sequences shows that $\coker [B_\sigma \, Q]$ and $\coker \beta_\sigma$ are isomorphic.  Thus it suffices to compute the torsion submodule of the cokernel of the matrix $[B_\sigma \, Q]$.  The Smith Normal Form of $[B_\sigma \, Q]$ will be a $(d+\ell)\times (|I_\sigma| +\ell)$ matrix with non-zero entries $a_1, a_2, \ldots, a_{\mathrm{min}(d+\ell, |I_\sigma|+\ell)}$ appearing on the diagonal, satisfying the divisibility relations $a_j | a_{j+1}$.  The entries $a_j\neq 1$ give the orders of the cyclic subgroups appearing in the invariant factor decomposition of $\Gamma_\sigma$.
\end{remark}

 \begin{example} \label{eg:hbwithtorsionisotropycalc}
 To illustrate Remark \ref{remark:SNF}, we consider the following example.
 Let $N=\Z^2 \oplus \Z/2\Z$, and let $\Sigma$ be the fan in Figure \ref{fig:hb}, with ray generators $(1,0)$, $(0,1)$, $(0,-1)$ and $(-1,-2)$.  Set $\beta:\Z^4 \to N$ to be
 $$
 \beta(x,y,z,w)=(-2x+3z,-4x+6y-2w,x+y+z+w \operatorname{mod} 2).
 $$ 
\setlength{\unitlength}{2cm}
\begin{figure}
\centering
\begin{minipage}{.4\textwidth}
\centering
\begin{picture}(2.125,2.125)(0,-.7)
\put(-.2,.3){$\Sigma=$}
\thicklines
\put(1,.2){\vector(1,0){0.5}} 
\put(1,.2){\vector(0,1){.5}}
\put(1,.2){\vector(-1,-2){.5}} \put(.3,-.7){$\rho_1$}
\put(1,.2){\vector(0,-1){.5}}
\thinlines
\multiput(1.55,.2)(.1,0){4}{\line(1,0){.05}} \put(1.1,1){$\rho_2$}
\multiput(1,-.4)(0,-.1){5}{\line(0,1){.05}} \put(1.1,-.7){$\rho_4$}
\multiput(1,.75)(0,.1){4}{\line(0,1){.05}} \put(1.8,.3){$\rho_3$}
\put(1.25,.25){\line(-1,1){.2}}
\put(1.5,.25){\line(-1,1){.45}}
\put(1.75,.25){\line(-1,1){.7}}
\put(1.25, .15){\line(-1,-1){.2}}
\put(1.5,.15){\line(-1,-1){.45}}
\put(1.75,.15){\line(-1,-1){.7}}
\put(.95,-.05){\line(-1,0){.05}}
\put(.95,-.3){\line(-1,0){.15}}
\put(.95,-.55){\line(-1,0){.25}}
\qbezier(.825,-.05)(.8,0.25)(.95,.45)
\qbezier(.7,-.3)(.5,0.35)(.95,.7)
\qbezier(.575,-.55)(.2,.45)(.95,.95)
\end{picture} 
\end{minipage}
\begin{minipage}{.4\textwidth}
\centering
\begin{picture}(2.125,2.125)(0,-.7)
\put(-0.2,.3){$\Delta=$}
\thicklines
\put(.6,0){\line(1,0){1.8}}
\put(.6,0){\line(0,1){.6}}
\put(.6,.6){\line(1,0){.6}}
\put(1.2,.6){\line(2,-1){1.2}}
\end{picture} 
\end{minipage}
\caption{A polytope $\Delta$ in $\R^2\cong(N\otimes \R)^\dual$ and its dual fan $\Sigma=\Sigma(\Delta)$, from Example~\ref{eg:hbwithtorsionisotropycalc}.}
 \label{fig:hb}
\end{figure}
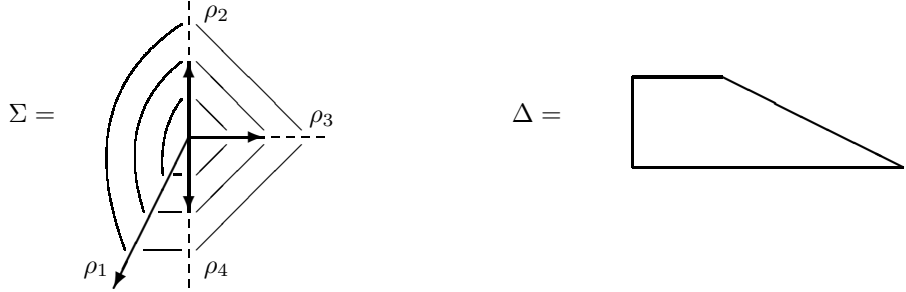
Fix the resolution $0 \to \Z \stackrel{Q}{\longrightarrow}  \Z^3 \to N \to 0$ with $Q= \begin{bmatrix}0 & 0 & 2
\end{bmatrix}^T$, and choose $B:\Z^4 \to \Z^3$ so that 
$$
[B\, Q] = \begin{bmatrix}
-2 & 0 & 3 & 0 & 0 \\
-4 & 6 & 0  &-2 & 0 \\
1 & 1 & 1 & 1 & 2
\end{bmatrix}.
$$
Let $\sigma$ be the cone in $\Sigma$ generated by $\rho_1$ and $\rho_2$.  Then since $$
[B_\sigma\, Q] = \begin{bmatrix}
-2 & 0 & 0 \\
-4 & 6  & 0 \\
1 & 1  & 2
\end{bmatrix} \text{ has Smith Normal Form }
\begin{bmatrix}
1&0&0\\
0&2&0\\
0&0&12
\end{bmatrix},
$$
we see that the isotropy group $\Gamma_\sigma \cong \mu_2 \times \mu_{12}$.
\end{example}

Notice that if $\sigma$ is maximal,  then $N_\sigma$ has the same rank as $N$. The following Corollary is immediate.  (Compare with \cite[Prop 4.3]{BCS05}.)

\begin{corollary}
Let $(N,\Sigma,\beta)$ be a stacky  fan, and $[Z_\Sigma/G]$ its corresponding toric DM stack.  If $z=(z_1, \ldots, z_n) \in Z_\Sigma$ has $d=\rank N$ vanishing coordinates, then $\sigma=\sigma_z$ is maximal and $\stab(z)\cong N/N_z$.
\end{corollary}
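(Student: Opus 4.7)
The plan is to verify two distinct claims from the hypothesis that $z$ has exactly $d = \rank N$ vanishing coordinates: first, that the minimal cone $\sigma_z$ is maximal in $\Sigma$, and second, that the torsion submodule $\Tor(N/N_z)$ (which by Theorem~\ref{theorem:isotropy} is isomorphic to $\stab(z)$) coincides with the full group $N/N_z$.

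For the first claim, I would recall that $I_z = \{i : z_i = 0\}$, so by hypothesis $|I_z| = d$, and $\sigma_z$ is by construction the cone spanned by the rays $\{\rho_i : i \in I_z\}$. Since $\Sigma$ is simplicial, these $d$ rays are linearly independent in $N \otimes \R$, so $\sigma_z$ is a $d$-dimensional simplicial cone. Because $\Sigma$ is a fan in $N \otimes \R$ which is itself $d$-dimensional, any $d$-dimensional cone is necessarily maximal.

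For the second claim, I would observe that $N_z$ is the subgroup of $N$ generated by the vectors $\{\beta(e_i) : i \in I_z\}$. As in the proof of Theorem~\ref{theorem:isotropy}, the fact that $\Sigma$ is simplicial implies that $\{\beta(e_i) \otimes 1 : i \in I_z\}$ are linearly independent in $N \otimes \R$, so $\rank N_z = |I_z| = d = \rank N$. Therefore the quotient $N/N_z$ has rank $0$, i.e., it is a finite abelian group. Consequently $\Tor(N/N_z) = N/N_z$, and applying Theorem~\ref{theorem:isotropy} yields $\stab(z) \cong N/N_z$, as required.

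There is really no substantial obstacle here; the only point that requires minor care is the identification of the rank of $N_z$, which is precisely the argument used in the proof of Theorem~\ref{theorem:isotropy} to show that $\beta_\sigma : \Z^{I_\sigma} \to N_\sigma$ is injective. Everything else is a direct specialization of Theorem~\ref{theorem:isotropy} to the top-dimensional situation.
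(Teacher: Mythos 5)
Your proof is correct and follows the same route the paper intends: the paper simply remarks that for a maximal cone $N_\sigma$ has the same rank as $N$ and declares the corollary immediate from Theorem~\ref{theorem:isotropy}, and your write-up just fills in those details (linear independence of the $d$ normals forces $\sigma_z$ to be top-dimensional hence maximal, and forces $N/N_z$ to be finite so that it equals its own torsion submodule). No gaps.
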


\subsection{Global quotients among toric DM stacks} \label{sec:globquot}

It is of interest to determine when a given stack $\mathcal{X}$ is a global quotient in the sense of stacks. 
The goal of this subsection is to characterize the global quotients among toric DM stacks in terms of the combinatorics of the stacky fan, and to give an explicit description of the quotient stack $[Z_\Sigma/G_0]$, which \cite{HaradaKrepski:2011} shows is the universal cover (in the sense of stacks) of a toric DM stack $[Z_\Sigma/G]$. 
Similar results are also obtained in \cite[Proposition 5.5, Corollary 5.7]{HaradaKrepski:2011}.

Let $G_0\subset G$ denote the connected component of the identity element.  By construction, the short exact sequence $0 \to \Tor(\DG(\beta)) \to \DG(\beta) \to \DG(\beta)/\Tor(\DG(\beta)) \to 0$ dualizes to give the short exact sequence $1 \to G_0 \to G \to G/G_0 \to 1$.  Below, we identify $G_0$ and the quotient $G/G_0$ in terms of stacky fan data.

\begin{lemma}\label{le:GmodG0}
Let $(\Sigma,N,\beta)$ be a stacky fan, and $G=\Hom(\DG(\beta),\T)$.
 If  $G_0 \subset G$ denotes the connected component of the identity element, then $G/G_0 \cong \coker \beta$.  In particular, $G$ is connected if and only if $\beta$ is surjective.
\end{lemma}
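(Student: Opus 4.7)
My plan reduces the lemma to the algebraic identity $\Tor(\DG(\beta)) \cong \coker \beta$ of finite abelian groups.  Since $\T = \C^\times$ is divisible (hence $\Z$-injective), applying $\Hom(-,\T)$ to the torsion sequence $0 \to \Tor \DG(\beta) \to \DG(\beta) \to \DG(\beta)/\Tor \DG(\beta) \to 0$ yields a short exact sequence of abelian Lie groups realizing $G$ as an extension of a finite group by $(\C^\times)^r$ (with $r$ the rank of $\DG(\beta)$).  Since $(\C^\times)^r$ is connected and open in $G$, it is precisely $G_0$, so $G/G_0 \cong \Hom(\Tor \DG(\beta),\T)$; and since $\Tor \DG(\beta)$ is finite, any embedding into the roots of unity $\subset \T$ shows this Hom-group is non-canonically isomorphic to $\Tor \DG(\beta)$ itself.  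So it suffices to identify $\Tor \DG(\beta)$ with $\coker \beta$.

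Next, set $M := [B\, Q] \colon \Z^{n+\ell} \to \Z^{d+\ell}$, so that $\DG(\beta) = \coker M^\dual$.  A quick calculation, using the defining resolution $\Z^{d+\ell}/Q(\Z^\ell) = N$, identifies $\coker M = N/\beta(\Z^n) = \coker \beta$, which is finite by condition \ref{item:rays span} of Definition~\ref{definition:stacky fan}.

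The main step, and the crux of the argument, is to derive the canonical short exact sequence
$$0 \to \coker \beta \to \DG(\beta) \to (\ker M)^\dual \to 0.$$
I would obtain this by factoring $M$ as $\Z^{n+\ell} \twoheadrightarrow \im M \hookrightarrow \Z^{d+\ell}$, dualizing each factor via $\Hom(-,\Z)$, and invoking the standard facts $\Ext(\im M,\Z) = 0$ (since $\im M$ is free) and $\Ext(\coker \beta,\Z) \cong \coker \beta$ (since $\coker \beta$ is finite).  A diagram-chase combines the two resulting dual short exact sequences to display $\DG(\beta)$ as an extension of $(\ker M)^\dual$ by $\coker \beta$; because $(\ker M)^\dual$ is finitely generated and free, this canonically identifies $\coker \beta$ with $\Tor \DG(\beta)$, and hence $G/G_0 \cong \coker \beta$.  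The final assertion is then immediate: $G = G_0$ precisely when $G/G_0$ is trivial, i.e.\ when $\coker \beta = 0$, i.e.\ when $\beta$ is surjective.
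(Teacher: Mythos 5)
Your proposal is correct. The overall architecture matches the paper's: both reduce to showing $\Tor(\DG(\beta)) \cong \coker\beta$, both use injectivity of $\T$ to split off $G_0 \cong (\C^\times)^r$ and identify $G/G_0$ with $\Hom(\Tor(\DG(\beta)),\T)$, and both pass through the observation that $\coker[B\,Q] \cong \coker\beta$ (the paper via the same kernel computation for $\Z^{d+\ell} \to N \to \coker\beta$). Where you diverge is in the key homological step: the paper identifies $\Tor(\DG(\beta))$ abstractly as $\Ext(\DG(\beta),\Z)$ and computes that $\Ext$ group by applying $\Hom(-,\Z)$ to the two-term free resolution $0 \to (\Z^{d+\ell})^\dual \xrightarrow{[B\,Q]^\dual} (\Z^{n+\ell})^\dual \to \DG(\beta) \to 0$ (noting $[B\,Q]^\dual$ is injective because $\coker\beta$ is finite), whereas you factor $[B\,Q]$ through its image, dualize the two resulting short exact sequences, and splice them to exhibit the short exact sequence $0 \to \coker\beta \to \DG(\beta) \to (\ker[B\,Q])^\dual \to 0$ directly. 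Your route is slightly longer but buys something the paper's does not: a canonical embedding of $\coker\beta$ into $\DG(\beta)$ with image exactly the torsion submodule, rather than an abstract isomorphism of finite groups. This is essentially the content that the paper has to reconstruct by hand later, in the explicit description of the quotient map $G \to G/G_0$ following Remark~\ref{remark:about component map}, so your version would streamline that discussion. The paper's route is shorter if one only wants the abstract isomorphism asserted in the lemma.
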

\begin{proof}
As $G=\Hom(\DG(\beta),\T)$, the group of connected components $G/G_0$ is $\Hom(\Tor(\DG(\beta)),\T)$, where $\Tor(\DG(\beta))$ denotes torsion submodule of $\DG(\beta)$. This torsion submodule may be identified by computing  $\Ext(\DG(\beta),\Z)$, which we show next is isomorphic to $\coker \beta$.

Note that $\coker \beta \cong \coker [B \, Q]$, where $[B\, Q]: \Z^{n+\ell} \to \Z^{d+\ell}$ is the homomorphism described in Section~\ref{se:prelim}.  This can be seen by verifying that the surjective composition $\Z^{d+\ell} \to N \to \coker \beta$ has kernel $\im [B \,Q]$.  Therefore, it suffices to show that $\Ext(\DG(\beta),\Z) \cong \coker [B\, Q]$.

To that end,  consider the free resolution that defines $\DG(\beta)$:
$$
\xymatrix{
0 \ar[r] & (\Z^{d+\ell})^\dual \ar[r]^{[B \, Q]^\dual} & (\Z^{n+\ell})^\dual \ar[r] & \DG(\beta) \ar[r] & 0
}
$$
(Note that since $\coker [B\, Q] \cong \coker \beta$ is assumed to be finite, then $[B\, Q]^\dual$ is injective.)  Applying $\Hom(-,\Z)$ and taking homology shows that $\Ext(\DG(\beta),\Z) \cong \coker [B\, Q]$, as required.
\end{proof}

\begin{remark} By \cite[Cor. 3.5]{HaradaKrepski:2011}, Lemma \ref{le:GmodG0} shows that the (stacky) fundamental group of $\mathcal{X}(N,\Sigma,\beta)$ is therefore isomorphic to $\coker \beta$.  (Cf. \cite[Section 3.2]{Fulton93})
\end{remark}

\begin{remark} As in the proof of the preceding Lemma, $\coker \beta \cong \coker [B\,Q]$ and thus the invariant factor decomposition of $G/G_0$ may be determined immediately from the Smith Normal Form of the matrix $[B\, Q]$.  For example, the reader may verify that $G/G_0 \cong \Z/2\Z$ in Example \ref{eg:hbwithtorsionisotropycalc}.
\end{remark}

Given a stacky fan $(N,\Sigma,\beta)$, we may model the quotient stack $[Z_\Sigma/G_0]$ as a toric DM stack of a related stacky fan $(N_0, \Sigma_0,\beta_0)$, defined as follows.  Consider the submodule $N_0 = \im(\beta) \subset N$, and let $\beta_0: \Z^n\rightarrow N_0$ be given by $\beta$ with its restricted codomain. Finally, we let $\Sigma_0$ be the fan in $N_0\otimes \R$ corresponding to $\Sigma$ defined by the natural isomorphism $N_0\otimes \R \cong N\otimes \R$ induced by the inclusion $N_0\subset N$ (where we have used the fact that $\beta$ has finite cokernel).

\begin{proposition} \label{prop:stackyfanG0}
 Let $(N, \Sigma, \beta)$ be a stacky fan and $\mathcal{X}(N,\Sigma,\beta)=[Z_\Sigma/G]$ its corresponding toric DM stack. The triple  $(N_0, \Sigma_0, \beta_0)$  defined as above is a stacky fan whose corresponding toric DM stack is $\mathcal{X}(N_0,\Sigma_0,\beta_0)=[Z_\Sigma/G_0]$. 
\end{proposition}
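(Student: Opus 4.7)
The plan is to verify the three conditions of Definition~\ref{definition:stacky fan} for $(N_0, \Sigma_0, \beta_0)$, observe that $Z_{\Sigma_0} = Z_\Sigma$, and then identify the group $\Hom(\DG(\beta_0), \T)$ associated to the new stacky fan with $G_0 \subset G$ in a way compatible with the respective actions on $Z_\Sigma$.

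For the first part, since $\coker \beta$ is finite, the submodule $N_0 = \im(\beta) \subset N$ has full rank, so the natural map $N_0 \otimes \R \to N \otimes \R$ is an isomorphism. Under this identification the rays $\rho_i$ of $\Sigma$ are generated by $\beta_0(e_i) = \beta(e_i) \in N_0$, so $\Sigma_0$ is a rational simplicial fan in $N_0 \otimes \R$ satisfying the conditions of Definition~\ref{definition:stacky fan} with respect to $\beta_0$. The ideal $J(\Sigma_0)$ depends only on the combinatorial data of which rays lie in which cones, so $J(\Sigma_0) = J(\Sigma)$ and hence $Z_{\Sigma_0} = Z_\Sigma$.

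The heart of the proof is to compare $\DG(\beta)$ with $\DG(\beta_0)$.  My plan is to fix a free resolution $0 \to \Z^\ell \stackrel{Q}{\longrightarrow} \Z^{d+\ell} \stackrel{p}{\longrightarrow} N \to 0$ together with a lift $B:\Z^n \to \Z^{d+\ell}$ of $\beta$, and to set $M_0 := p^{-1}(N_0)$.  Because $N/N_0$ is finite, $M_0$ is free of the same rank as $\Z^{d+\ell}$, and $0 \to \Z^\ell \stackrel{Q}{\longrightarrow} M_0 \to N_0 \to 0$ is a free resolution of $N_0$; moreover, the image of $B$ automatically lies in $M_0$, so $B$ serves as a lift $B_0$ of $\beta_0$ as well.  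If $\iota: M_0 \hookrightarrow \Z^{d+\ell}$ denotes the inclusion, then $[B, Q] = \iota \circ [B_0, Q]$.  Since $\beta_0$ is surjective, $[B_0, Q]$ is surjective and hence $[B_0, Q]^\dual$ is injective, while dualizing the short exact sequence $0 \to M_0 \stackrel{\iota}{\longrightarrow} \Z^{d+\ell} \to N/N_0 \to 0$ identifies $\coker \iota^\dual \cong \Ext(N/N_0, \Z) \cong N/N_0$.  Combining these observations yields a short exact sequence
\begin{equation*}
0 \to N/N_0 \to \DG(\beta) \to \DG(\beta_0) \to 0.
\end{equation*}

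To conclude, I would apply $\Hom(-, \T)$ (exact because $\T$ is divisible) to obtain the short exact sequence $1 \to \Hom(\DG(\beta_0), \T) \to G \to G/G_0 \to 1$, where the last identification uses Lemma~\ref{le:GmodG0}.  This exhibits $\Hom(\DG(\beta_0), \T)$ as a subgroup of $G$ equal to $G_0$.  Compatibility with the $\T^n$-actions then follows because $\beta_0^\vee$ and $\beta^\vee$ are both obtained by restriction from $(\Z^{n+\ell})^\dual$ to the $(\Z^n)^\dual$ summand and fit into a commutative square with the surjection $\DG(\beta) \twoheadrightarrow \DG(\beta_0)$, so the inclusion $G_0 \hookrightarrow G$ as subgroups of $\T^n$ agrees with the map coming from $\beta_0^\vee$.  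I expect the main technical point to be the bookkeeping of the two lifts; the crucial observation is that the finite cokernel of $\beta$ allows the single lift $B$ to play the role of a lift for both $\beta$ and $\beta_0$, and this is what threads $\DG(\beta)$ and $\DG(\beta_0)$ through the same ambient free resolution.
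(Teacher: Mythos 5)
Your argument is correct and follows essentially the same route as the paper: both rest on the short exact sequence $0 \to \coker\beta \to \DG(\beta) \to \DG(\beta_0) \to 0$, which the paper obtains by applying Lemma~2.3 of \cite{BCS05} to the inclusion $N_0 \subset N$ and which you rederive by hand via the compatible resolutions built from $M_0 = p^{-1}(N_0)$ and the single lift $B$. The one step worth making explicit is the final identification of $\ker\bigl(G \to \Hom(N/N_0,\T)\bigr)$ with $G_0$: an abstract isomorphism of the quotient with $G/G_0$ does not by itself pin down the kernel, so either observe that the kernel is open (hence contains $G_0$) and has index $|N/N_0| = |\coker\beta| = |G/G_0|$ by Lemma~\ref{le:GmodG0}, or argue as the paper does that $\DG(\beta_0)$ is torsion-free because $\coker\beta_0 = 0$, so $\Hom(\DG(\beta_0),\T)$ is connected and of the same dimension as $G$.
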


\begin{proof} It is straightforward to verify that $(N_0, \Sigma_0, \beta_0)$ defines a stacky fan.  Since $\Sigma_0$ and $\Sigma$ contain the same combinatorial information,  $J(\Sigma)=J(\Sigma_0)$, and thus $Z_{\Sigma_0}=Z_\Sigma$. 

 It remains to verify that the group action on $Z_\Sigma$ determined by the stacky fan $(N_0,\Sigma_0,\beta_0)$ is the same as that obtained by the restriction of the action of $G$ to the connected component of the identity $G_0$ on $Z_\Sigma$.

 To see this, we apply Lemma 2.3 of \cite{BCS05} to the following diagram of short exact sequences.
$$
\xymatrix@C=1.5em@R=1.5em{
 0 \ar[r] & \Z^n \ar@{=}[r] \ar[d]_{\beta_0} & \Z^n \ar[r] \ar[d]^{\beta} & 0 \ar[r] \ar[d] & 0 \\
0\ar[r] & {N_0} \ar[r] & N \ar[r] & \coker \beta \ar[r] & 0\\
 }
$$
Since $\DG(\{0\} \to \coker(\beta))$ can be naturally identified with $\coker(\beta)$, we obtain the diagram below with exact rows.
$$
\xymatrix@C=1.5em@R=1.5em{
 0 \ar[r] & 0 \ar[r] \ar[d] & (\Z^n)^\star \ar@{=}[r] \ar[d]_{\beta^\vee} & (\Z^n)^\star \ar[r] \ar[d]^{(\beta_0)^\vee} & 0 \\
0\ar[r] & \coker \beta \ar[r] & \DG(\beta) \ar[r] & \DG( \beta_0) \ar[r] & 0\\
 }
$$
This shows that $\DG(\beta_0)$ and $\DG(\beta)$ have the same rank and thus 
$\Hom(\DG(\beta_0),\T)$ 
 and $G$ have the same dimension.  To show that $\Hom(\DG(\beta_0),\T)$  is connected, it suffices to verify that $\DG(\beta_0)$ is torsion free, which follows from Lemma~\ref{le:GmodG0}.

Thus $\Hom(\DG(\beta_0),\T)=G_0$, the connected component of the identity in $G$. Lastly, note that the $G_0$ action on $\Z_\Sigma$ is induced by the composition $(\Z^n)^\dual \stackrel{\beta^\vee}{\longrightarrow} \DG(\beta)\longrightarrow \DG(\beta_0)$ so that $G_0$ acts via its inclusion into $G$, as desired.
\end{proof}

\bigskip

The above discussion yields a combinatorial condition characterizing global quotients among toric DM stacks. 
\begin{corollary} \label{cor:globalquotientcondition}
 Let $(N,\Sigma,\beta)$ be a stacky fan, and let $N_0=\im(\beta)$.
\begin{enumerate}
\item $N_\sigma=N_0$ for all maximal cones $\sigma\in\Sigma$ if and only if $G_0$ acts freely on $Z_\Sigma$.
\item $N_\sigma=N$ for all maximal cones $\sigma\in\Sigma$ if and only if $G$ is connected and acts freely on $Z_\Sigma$. 
\end{enumerate}
\end{corollary}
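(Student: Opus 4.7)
The plan is to reduce both statements to the computation of isotropy groups carried out in Theorem~\ref{theorem:isotropy}, together with the combinatorial identification of the connected component provided by Proposition~\ref{prop:stackyfanG0} and Lemma~\ref{le:GmodG0}. First I would invoke Remark~\ref{remark:trivial isotropy and max cones}, which says the action is free exactly when $\Gamma_\sigma$ is trivial for every maximal cone $\sigma \in \Sigma$, so that throughout the argument I may restrict to maximal cones.

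For part (1), the key move is to recognize $[Z_\Sigma/G_0]$ as a toric DM stack in its own right: by Proposition~\ref{prop:stackyfanG0}, it equals $\mathcal{X}(N_0,\Sigma_0,\beta_0)$. Since $\beta_0$ and $\beta$ agree on the standard basis vectors, the submodule of $N_0$ generated by $\{\beta_0(e_i) \mid i \in I_\sigma\}$ is precisely $N_\sigma$. Applying Theorem~\ref{theorem:isotropy} to this stack therefore identifies the $G_0$-isotropy at a point $z$ with $\Tor(N_0/N_{\sigma_z})$. Now for a maximal cone $\sigma$, the rays $\{\beta(e_i)\otimes 1\}_{i \in I_\sigma}$ are linearly independent in $N \otimes \R$ and $|I_\sigma| = d = \rank N = \rank N_0$, so $N_\sigma$ has full rank in $N_0$; consequently $N_0/N_\sigma$ is a finite torsion group and $\Tor(N_0/N_\sigma) = N_0/N_\sigma$. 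This group vanishes precisely when $N_\sigma = N_0$, which gives the equivalence in (1).

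For part (2), I combine (1) with Lemma~\ref{le:GmodG0}: $G$ is connected if and only if $\beta$ is surjective, i.e.\ $N_0 = N$. Suppose $N_\sigma = N$ for every maximal cone $\sigma$; then $N = N_\sigma \subseteq N_0 \subseteq N$ forces $N_0 = N$, so $G = G_0$ is connected, and applying (1) yields that $G = G_0$ acts freely. Conversely, if $G$ is connected and acts freely, then $N_0 = N$ and (1) gives $N_\sigma = N_0 = N$ for every maximal $\sigma$. The only mildly subtle step is the rank count that makes $N_0/N_\sigma$ automatically torsion for maximal $\sigma$, which is what allows the conclusion $\Tor(N_0/N_\sigma) = 0 \iff N_\sigma = N_0$ rather than just one implication.
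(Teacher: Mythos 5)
Your proof is correct and follows essentially the route the paper intends: the corollary is stated as a consequence of "the above discussion," namely Theorem~\ref{theorem:isotropy} applied (via Proposition~\ref{prop:stackyfanG0}) to the stacky fan $(N_0,\Sigma_0,\beta_0)$, together with Remark~\ref{remark:trivial isotropy and max cones} and Lemma~\ref{le:GmodG0}. Your explicit observation that $N_0/N_\sigma$ is finite for maximal $\sigma$ (so that $\Tor(N_0/N_\sigma)=N_0/N_\sigma$) is exactly the point that makes the equivalence bidirectional, and is correctly justified.
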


As mentioned above, the first item in Corollary \ref{cor:globalquotientcondition}  gives a criterion for detecting global quotients among toric DM stacks.  In particular, it shows that if $N_\sigma=N_0$ for all maximal cones, the toric DM stack $\mathcal{X}(N_0,\Sigma_0,\beta_0)=[Z_\Sigma/G_0]$ is in fact a smooth manifold.  As shown in \cite{HaradaKrepski:2011}, this exhibits the toric DM stack $\mathcal{X}(N,\Sigma,\beta)$ as a global quotient in this case---more precisely, when $N_\sigma=N_0$ for all maximal cones $\sigma \in \Sigma$, there is a natural equivalence of stacks $\mathcal{X}(N,\Sigma,\beta)\cong [(Z_\Sigma/G_0)/\Lambda]$ where  $\Lambda = G/G_0$.

\begin{example} Consider the toric DM stack $[Z_\Sigma/G]$ from Example \ref{eg:segment}, the labelled line segment with labels $r$ and $s$. Since $N_0 = g\Z$ where $g=\gcd(r,s)$, $[Z_\Sigma/G]$ is a global quotient if and only if $r=g=s$.  In that case, the $G\cong \T\times \mu_r$-action on $Z_\Sigma$ is induced by the homomorphism $G\to \T^2$, $(t,\xi_r^k)\mapsto (t,t\xi_r^k)$.  Therefore, $Z_\Sigma/G_0 =\P^1$, and the residual $\Lambda=G/G_0 \cong \mu_r$-action may be written in homogeneous coordinates as $\xi_r^k \cdot [z_0:z_1] = [z_0:\xi_r^k z_1]$, and $[Z_\Sigma/G] \cong [\P^1/\mu_r]$.
 \end{example}

\bigskip
\begin{remark} \label{remark:about component map}
Analogous to the map constructed in Proposition \ref{prop:concrete isotropy} modelling the inclusion of the isotropy groups, we may also model the quotient map $G \to G/G_0$ (cf. Lemma \ref{le:GmodG0}) more concretely as in the discussion that follows.  See Proposition \ref{eq:explicitGmodG0} for a more direct approach in the case that $N$ is free.
\end{remark}

Recall that the quotient $G \to G/G_0$ is the map $\Hom(\DG(\beta),{\T}) \to \Hom(\Tor(\DG(\beta)),{\T})$, induced by the inclusion of the torsion submodule  $\Tor(\DG(\beta)) \hookrightarrow \DG(\beta)$.  We shall describe an explicit isomorphism
$$
\Hom(\Tor(\DG(\beta)),{\T}) \stackrel{\cong}{\longrightarrow} \coker [B\, Q].
$$
Consider the diagram of short exact sequences,
$$
\xymatrix{
0 \ar[r] & {(\Z^{d+\ell})^\dual} \ar[r] \ar@{=}[d] & P \ar[r] \ar@{^{(}->}[d] & \Tor(\DG(\beta)) \ar[r] \ar@{^{(}->}[d] & 0 \\ 
0 \ar[r] & {(\Z^{d+\ell})^\dual} \ar[r]^{[B\, Q]^\dual}   & {(\Z^{n+\ell})^\dual} \ar[r] & {\DG(\beta)} \ar[r] & 0
}
$$
obtained by restriction (pullback) to $\Tor(\DG(\beta))$. Given a homomorphism $\varphi: \Tor(\DG(\beta)) \to \T=\C/\Z$, choose a homomorphism $\hat\varphi: P \to \C$ covering $\varphi$.  The restriction $\hat\varphi |_{(\Z^{d+\ell})^\dual}$ is integer-valued, and hence defines a vector $v_{\hat\varphi} \in \Z^{d+\ell}$ by duality (i.e. $\hat\varphi |_{(\Z^{d+\ell})^\dual}(u)=uv_{\hat\varphi}$ for all row vectors $u \in (\Z^{d+\ell})^\dual$). 

 Any two covers $\hat\varphi_1$, $\hat\varphi_2$ of $\varphi$ differ by a homomorphism $\alpha:P \to \Z$, which by restriction to $(\Z^{d+\ell})^\dual$ defines a vector $v_\alpha \in \Z^{d+\ell}$ that is the difference between $v_{\hat\varphi_1}$ and $v_{\hat\varphi_2}$.  We check that $v_\alpha$ is in the image of $[B\, Q]:\Z^{n+\ell} \to \Z^{d+\ell}$, and hence the correspondence $\varphi \mapsto v_{\hat\varphi}$ descends to a well-defined homomorphism 
$\Hom(\Tor(\DG(\beta)),{\T}) \to \coker [B\, Q]$.  Applying the Snake Lemma to the diagram of short exact sequences above shows that the quotient $(\Z^{n+\ell})^\dual/P \cong \DG(\beta)/\Tor(\DG(\beta))$ is free; therefore, $\alpha:P\to \Z$ may be extended to a homomorphism $\tilde\alpha: (\Z^{n+\ell})^\dual \to \Z$, which by duality defines a vector $w_{\tilde\alpha}$ in $\Z^{n+\ell}$.  Then for any row vector $u \in (\Z^{d+\ell})^\dual$, we have
$$
uv_{\alpha} = \alpha|_{ (\Z^{d+\ell})^\dual}(u) = \tilde\alpha(u [B\, Q]) = u [B\, Q]w_{\tilde\alpha},
$$
and hence $v_\alpha = [B\, Q]w_{\tilde\alpha}$, as required.

A similar argument shows that $\Hom(\Tor(\DG(\beta)),{\T}) \to \coker [B\, Q]$ is injective, which implies it is also surjective since these groups are known to be abstractly isomorphic finite groups.

\subsection{Isotropy and labelled polytopes} \label{sec:Nfree}
Let $N$ be a free $\Z$-module of rank $d$.
Viewing stacky polytopes as labelled polytopes, as in Section \ref{definition:labelled polytope}, we now give a more direct description of the maps modelling the inclusion of isotropy groups $\stab(z) \hookrightarrow G$ (see Theorem \ref{theorem:isotropy} and Proposition \ref{prop:concrete isotropy}), and the quotient map $G\to G/G_0$ (see Lemma \ref{le:GmodG0} and the discussion following Remark \ref{remark:about component map}). 

\medskip

Let $(\Delta, \{ m_i \}_{i=1}^{n})$ in $(N\otimes \R)^\dual \cong (\R^d)^\dual$ be a labelled polytope.  Let $\beta:\Z^n \to N$ be given by $\beta(e_i)=m_i\nu_i$, where $m_i\nu_i$ are the weighted normals to the facets of $\Delta$, and consider the resulting homomorphism $\bar\beta:(S^1)^n \to (S^1)^d$ with kernel $\KD=\ker \bar\beta$. Let $\exp: \Z^n \otimes \R \to (S^1)^n$ denote the exponential map.
 
 \medskip
 
 We begin with an explicit description of the stabilizers of Theorem~\ref{theorem:isotropy}. 

\begin{proposition}\label{prop:compactisomorphism}
Let $(\Delta, \{ m_i\}_{i=1}^n )$ be a labelled polytope in $(N\otimes \R)^\dual$ with primitive inward pointing facet normals $\nu_1 \otimes 1, \ldots, \nu_n \otimes 1$, and  $[\mu^{-1}(\tau)/K_D]$ its corresponding toric DM stack. For $z=(z_1, \ldots, z_n)$ in $\mu^{-1}(\tau) \subset \C^n$, let $N_z \subset N$ denote the submodule generated by $\{ m_i\nu_i \mid z_i=0 \}$.  Then the canonical map
$$
\Tor(N/N_z) \to K_D, \quad x+N_z \mapsto \exp(y\otimes \tfrac{1}{m}),
$$
is an isomorphism onto its image $\stab(z)$,
where $y$ is the unique element in $\operatorname{span}\{ e_i \mid z_i=0\} \subset \Z^n$  satisfying $\beta(y)=mx$ for some smallest positive integer $m$.
\end{proposition}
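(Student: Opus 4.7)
The plan is to verify directly that the explicit formula $\phi(x + N_z) := \exp(y \otimes \tfrac{1}{m})$ defines a well-defined group homomorphism from $\Tor(N/N_z)$ into $K_D$, that its image lies in $\stab(z)$, and that the resulting map is a bijection onto $\stab(z)$. This bypasses the more abstract cokernel calculation of Proposition~\ref{prop:concrete isotropy}, relying instead on the fact that $N$ is free.

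First I would establish the structural fact that $N_z$ is a free $\Z$-module of rank $|I_z|$ with basis $\{m_i\nu_i : i \in I_z\}$: since $\Sigma$ is simplicial, the set $\{\nu_i \otimes 1 : i \in I_z\}$ is $\R$-linearly independent in $N \otimes \R$. As a consequence, the restriction of $\beta$ to $\Z^{I_z} := \operatorname{span}\{e_i : i \in I_z\} \subset \Z^n$ gives an isomorphism $\beta_{I_z} : \Z^{I_z} \xrightarrow{\sim} N_z$, which ensures that the element $y \in \Z^{I_z}$ satisfying $\beta(y) = mx$ is uniquely determined by $x$ and $m$. From here, well-definedness on cosets is a short check: replacing $x$ by $x + \beta(e)$ for some $e \in \Z^{I_z}$ replaces $y$ by $y + me$, so $y \otimes \tfrac{1}{m}$ changes by $e \otimes 1$, which is killed by $\exp$; and replacing the minimal $m$ by a multiple $km$ replaces $y$ by $ky$, yielding the same image.

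Next I would show the image lies in $\stab(z) = K_D \cap (S^1)^{I_z}$. The image is automatically in $(S^1)^{I_z}$ because $y \in \Z^{I_z}$, and $\bar\beta\bigl(\exp(y \otimes \tfrac{1}{m})\bigr) = \exp(\beta(y) \otimes \tfrac{1}{m}) = \exp(x \otimes 1)$, which is trivial in $(S^1)^d \cong (N \otimes \R)/N$ since $x \in N$. The homomorphism property then follows by taking a common multiple $m$ of the orders of $x_1$ and $x_2$ in $N/N_z$, writing $m x_i = \beta(y_i)$, and observing that $\phi(x_1 + x_2 + N_z) = \exp\bigl((y_1 + y_2) \otimes \tfrac{1}{m}\bigr) = \phi(x_1 + N_z)\,\phi(x_2 + N_z)$.

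Finally I would establish bijectivity. Injectivity is straightforward: $\exp(y \otimes \tfrac{1}{m}) = 1$ forces $y = me$ for some $e \in \Z^{I_z}$, whence $mx = \beta(y) = m\beta(e)$, so $x = \beta(e) \in N_z$. For surjectivity, I would take an arbitrary element of $\stab(z)$, write it as $\exp(v)$ with $v \in \R^{I_z}$ satisfying $\beta_\R(v) \in N$, set $x := \beta_\R(v)$, let $m$ be the smallest positive integer with $mv \in \Z^{I_z}$, and put $y := mv$. I expect the main obstacle to be exactly one technical point here: reconciling this $m$ (minimality for $mv \in \Z^{I_z}$) with the $m$ appearing in the definition of $\phi$ (minimality for $mx \in N_z$). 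The isomorphism $\beta_{I_z} : \Z^{I_z} \to N_z$ is precisely what translates one minimality condition into the other, so once that is in hand, $x + N_z$ has order $m$, is torsion, and maps to $\exp(v)$, completing the proof.
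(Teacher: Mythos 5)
Your proposal is correct and follows essentially the same route as the paper's own proof: both define the map explicitly by $x+N_z \mapsto \exp(y\otimes\tfrac{1}{m})$, use the fact that $\beta$ restricts to an isomorphism $\Z^{I_z}\to N_z$ (a consequence of simpliciality) to get uniqueness of $y$ and well-definedness, and prove injectivity and surjectivity by the same lifting arguments through $\exp$. Your explicit check of the homomorphism property and of the compatibility of the two minimality conditions in the surjectivity step are points the paper leaves implicit, but they do not constitute a different approach.
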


\begin{proof}
Let $\sigma$ be a cone in the normal fan $\Sigma(\Delta)$ with $I_\sigma=I_z$.
Analogous to Section \ref{sec:generalisotropy}, we see that $\Stab(z)$ is given by the kernel $\Gamma_\sigma$ of the composition (Cf. (\ref{eq:kernel})) 
$$
\KD \to (S^1)^n \to (S^1)^{|J_\sigma|}.
$$
We exhibit an isomorphism 
$
\psi:  \Tor(N/N_\sigma)\to \Gamma_\sigma ,
$
for any cone $\sigma$ of the normal fan $\Sigma(\Delta)$.

 Define $\psi:\Tor(N/N_\sigma) \to \Gamma_\sigma$ as follows.
Given $x + N_\sigma$ of order $m$, we may find a unique (see the Claim in Theorem 4.2) $y \in \Z^{I_\sigma}\subset \Z^{n}$ with $\beta(y) = mx$.  Consider $y\otimes \tfrac{1}{m} \in \Z^{n}\otimes \R$.  Since $\bar{\beta}(\exp(y\otimes \tfrac{1}{m})) =
\exp( x\otimes 1)=1$, $\exp(y\otimes \tfrac{1}{m}) \in \KD$.  And it is straightforward to see that ${q}_\sigma (\exp(y\otimes \tfrac{1}{m}) ) =1$, where ${q}_\sigma:(S^1)^{n} \to (S^1)^{|J_\sigma|}$ is the projection onto the ``non-trivial'' components; therefore, $\exp(y\otimes \tfrac{1}{m}) \in \Gamma_\sigma$ and we may set $\psi(x+N_\sigma) = \exp(y\otimes \tfrac{1}{m})$.

The map $\psi$ is well defined. Indeed, suppose we choose a different representative $x' + N_\sigma$ for $x+N_\sigma$, and let  $y'$ denote the corresponding element in $\Z^{I_\sigma}$ with $\beta(y') = mx'$.  Then there exists a (unique) $\eta \in \Z^{|I_\sigma|}$ satisfying $\beta(\eta) = x-x'$ and hence $\beta_\sigma(y-y') = \beta_\sigma(m\eta)$, which shows $y-y' = m\eta$.  Therefore, $\exp((y-y')\otimes \tfrac{1}{m}) = \exp (\eta \otimes 1) = 1$.

We check that $\psi$ is an isomorphism.  To check injectivity, suppose $\psi(x+N_{\sigma})=\exp(y\otimes \tfrac{1}{m})=1$, where $y$ and $m$ are as above.  Then $y\otimes \tfrac{1}{m}$ lies in the image of $\Z^{|I_{\sigma|}}\to \Z^{|I_{\sigma}|} \otimes \R$, which implies that $m=1$ so that $x \in N_\sigma$.  To check surjectivity, suppose $\gamma \in \Gamma_\sigma \subset (S^1)^{|I_\sigma|}$. Choose an element $v \in \Z^{|I_\sigma|} \otimes \R$ with $\exp(v) = \gamma$ and consider $(\beta_\sigma)_\R(v) \in N_\R$.  Since $\exp((\beta_\sigma)_\R(v))=1$, there is an element $x\in N$ such that $x\otimes 1 =\beta_\R(v)$.  We now check  that $\psi(x+N_\sigma) = \gamma$. Let $m$ be the order of $\gamma$.  Then $\exp(mv)=1$ and hence $mv = \xi \otimes 1$ for some $\xi$ in $\Z^{|I_{\sigma}|}$, and  $
\beta_\sigma (\xi) \otimes 1
= \beta_\R(mv) 
= mx\otimes 1
$
Therefore, $\beta_\sigma(\xi)=mx$, and $\exp(\xi \otimes \tfrac{1}{m})=\exp(v)=\gamma$, as required.
\end{proof}

\begin{remark}
The isomorphism given in Proposition~\ref{prop:compactisomorphism} is compatible with the one given in Proposition~\ref{prop:concrete isotropy} for general $\Z$-modules $N$.
Indeed,  if $N$ is assumed to be free, then we may choose an isomorphism $\tilde{q}:\Z^d \to N$, and take $R = \tilde{q}^{-1} \circ \beta_\sigma:\Z^I \to \Z^d$ in \eqref{R-resolution}.  With this choice, it can be shown that the following diagram commutes.
$$
\xymatrix@R-15pt{
& {K_D} \ar@{^{(}->}[r] \ar[dd] & { (S^1)^n \cong \Z^n \otimes S^1 }  \ar[dd] & {y\otimes \zeta} \ar@{|->}[dd]\\
\Tor(N/N_\sigma) \ar[ur] \ar[dr]_{\gamma_\sigma} & & & \\
& K \ar@{^{(}->}[r] & \Hom((\Z^n)^\dual, S^1) & {\nu \mapsto \zeta^{\nu(y)}}\\
}
$$
\end{remark}

Next, we give an explicit description of the component group $ \KD/(\KD)_0$, which models  Lemma \ref{le:GmodG0}, which shows that $\coker \beta \cong \KD/(\KD)_0$.

 Define a homomorphism $\varphi: N\to \KD/(\KD)_0$ as follows.  For $x\in N$, choose $y \in \Z^n \otimes \R$ with $\beta_\R(y)=x\otimes 1$ and consider $\exp(y) \in (S^1)^{n}$.  Then $\bar{\beta}(\exp(y)) =1$, and thus $\exp(y) \in \KD$.  Set $\varphi(x) = \exp(y)(\KD)_0$.  The map $\varphi$ is well-defined since $\ker \beta_\R$ is the Lie algebra of $\KD$, which exponentiates  onto $(\KD)_0$.  

The homomorphism $\varphi$ has kernel $\im(\beta)$.  Indeed, if  $\varphi(x)=\exp(y) \in (\KD)_0$, then there is an element $\zeta \in \ker \beta_\R$ with $\exp(y -\zeta) = 1$, and thus an integer vector $a \in \Z^n$ with  $ y-\zeta=a\otimes 1$.  Since $N\to N\otimes \R$ is injective ($N$ is free!), $x\otimes 1 =\beta_\R(y)=\beta_\R(a \otimes 1) = \beta(a)\otimes 1$ implies $\beta(a)=x$, and hence $\ker \varphi \subset \im(\beta)$.  Finally, if $x=\beta(a)$ for some $a\in \Z^n$, then we may choose $y=a\otimes 1$ to compute $\varphi(x)=\exp(y)(\KD)_0$ to see that $\exp(y)=1$, showing $\im \beta\subset \ker \varphi$.
Hence we obtain the following :

\begin{proposition} \label{eq:explicitGmodG0}
Suppose $N$ is torsion free.  The map $\varphi: N \to  \KD/(\KD)_0$ defined above descends to an isomorphism $\bar{\varphi}: \coker \beta \stackrel{\cong}{\longrightarrow} \KD/(\KD)_0$.
\end{proposition}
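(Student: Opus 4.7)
The discussion immediately preceding the proposition already carries out most of the work: $\varphi$ is defined, shown to be well-defined as a homomorphism, and its kernel is identified as $\im(\beta)$. Thus the existence of a well-defined and injective descent $\bar\varphi: \coker\beta \to K_D/(K_D)_0$ is automatic from the first isomorphism theorem. The only thing left to prove is surjectivity of $\bar\varphi$.

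I would establish surjectivity by a direct lifting argument that mirrors the construction of $\varphi$. Given a class $k(K_D)_0$ in $K_D/(K_D)_0$, pick a representative $k \in K_D \subset (S^1)^n$ and lift it through the exponential to obtain some $y \in \Z^n \otimes \R$ with $\exp(y)=k$. Because $k \in \ker \bar\beta$, applying $\exp$ to $\beta_\R(y) \in N \otimes \R$ gives the identity in $N\otimes S^1$. Since $N$ is torsion free, hence a free $\Z$-module of finite rank, the exponential map $N \otimes \R \to N \otimes S^1$ has kernel exactly the image of $N \hookrightarrow N \otimes \R$, $x \mapsto x \otimes 1$. Therefore $\beta_\R(y) = x \otimes 1$ for a unique $x \in N$, and by definition $\varphi(x) = \exp(y)(K_D)_0 = k(K_D)_0$, so $\bar\varphi$ realizes this class.

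As a sanity check and alternative route, one can appeal to Lemma~\ref{le:GmodG0} together with the identification $K \cong K_D$ from Section~\ref{definition:labelled polytope}, which already establishes that $K_D/(K_D)_0 \cong \coker\beta$ as abstract groups. Since both groups are finite (the cokernel of $\beta$ is finite by the stacky polytope hypothesis, and $K_D/(K_D)_0$ is finite because $K_D$ is compact), the injectivity of $\bar\varphi$ would then automatically upgrade to an isomorphism. I would prefer to include the direct argument above, however, since the content of the proposition is the explicit formula for $\bar\varphi$, not merely the existence of an isomorphism. The only mildly subtle point, and hence the ``main obstacle,'' is verifying that $\ker(\exp: N\otimes\R \to N \otimes S^1) = N$, which is where the torsion-free hypothesis on $N$ is essential; this is a standard consequence of $N$ being free of finite rank, and the rest of the argument is formal.
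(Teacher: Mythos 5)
Your proposal is correct and follows essentially the same route as the paper: the paper's argument consists of the preceding discussion (well-definedness of $\varphi$ and the computation $\ker\varphi=\im\beta$) together with the Snake Lemma applied to the diagram relating $\exp$ on $\ker\beta_\R$, $\Z^n\otimes\R$, and $N\otimes\R$, for which your direct lifting of $k\in K_D$ through $\exp$ is exactly the surjectivity of the connecting homomorphism made explicit. Your identification of $\ker(\exp\colon N\otimes\R\to N\otimes S^1)=N$ as the place where freeness of $N$ enters matches the paper's use of the injectivity of $N\to N\otimes\R$.
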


\begin{remark} The above arguments are essentially applications of the Snake Lemma (and parts of its proof).  For example, Proposition \ref{eq:explicitGmodG0} above follows part of the proof of the Snake Lemma for the diagram of short exact sequences,
$$
\xymatrix{
0 \ar[r] & {\ker \beta_\R} \ar[r] \ar[d] & {\Z^{n} \otimes \R} \ar[r]^{\beta_\R} \ar[d] & {N\otimes \R} \ar[r] \ar[d] & 0 \\
1 \ar[r] & \KD \ar[r] & {(S^1)^n} \ar[r] & {(S^1)^d} \ar[r] & 1
}
$$
and the map $\varphi$ is  the connecting homomorphism.
\end{remark}

\section{Weighted and Fake Weighted Projective Stacks}\label{se:WPS}

In this section we interpret the results in Section \ref{se:isotropy} for an important class of toric DM stacks known as weighted projective stacks.  In Section \ref{se:wps} we 
identify those toric DM stacks that are equivalent to weighted projective stacks in terms of their stacky fan data.  
Then in Section~\ref{se:fwps} we 
generalize our considerations to fake weighted projective stacks.

\subsection{Weighted projective stacks}\label{se:wps}
 As mentioned previously, weighted projective stacks are an important class of examples, and have been studied extensively both as stacks and as orbifolds. In this section we characterize those stacky polytopes corresponding to weighted projective stacks.

\begin{definition}
For positive integers $b_0, \ldots, b_d$, let $\T$ act on $\C^{d+1}
\smallsetminus \{0\}$   by $t\cdot(z_0, \ldots, z_d)~=~(t^{b_0}z_0, \ldots, t^{b_d}z_d)$. The resulting quotient stack $[(\C^{d+1}\smallsetminus \{0\})/\T]$ is called a \emph{weighted projective stack}, denoted $\calP(b_0, \ldots, b_d)$.   
\end{definition}

In particular, a weighted projective stack is a quotient by a {\em connected} one-dimensional Abelian Lie group action, but the action need not be effective.

Recall that to each $\calP(b_0, \ldots, b_d)$ there is associated stacky polytope $(N,\Delta, \beta)$ with $\DG(\beta)\cong \Z$ (see \cite[Example 21]{Sakai2010}) whose associated toric DM stack is equivalent to $\calP(b_0, \ldots, b_d)$.  Proposition \ref{prop:WPS} below shows that the toric DM stack corresponding to {\em any} stacky polytope satisfying $\DG(\beta)\cong \Z$ results in a weighted projective stack. 

\begin{proposition} \label{prop:WPS}
Let $(N, \Delta, \beta)$ be a stacky polytope, and let $\Sigma(\Delta)$ be the dual fan to $\Delta$. The associated toric DM stack $\mathcal{X}(N,\Sigma(\Delta),\beta)$ is a weighted projective stack $\calP(b_0, \ldots, b_d)$ 
if and only if 
$\DG(\beta) \cong \Z$.  In this case, the polytope $\Delta$ is a simplex, and the weights are determined by the condition that $(b_0,\dots, b_d)$ generates $\ker \beta \subset \Z^{d+1}$.
\end{proposition}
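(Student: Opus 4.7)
The plan is to prove both implications by direct computation from the definitions, starting with a common rank count. From the exact sequence
$0 \to N^\dual \to (\Z^n)^\dual \to \DG(\beta) \to \Ext(N,\Z) \to 0$
(obtained by applying $\Hom(-,\Z)$ to a free resolution of $N$ and using the Gale-dual construction) together with the finiteness of $\Ext(N,\Z) \cong \Tor(N)$, a rank count yields $\rank \DG(\beta) = n - d$. Thus the condition $\DG(\beta) \cong \Z$ is equivalent to the conjunction of $n = d+1$ and $\DG(\beta)$ being torsion-free.

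For the $(\Leftarrow)$ direction, assume $\DG(\beta) \cong \Z$, so $n = d+1$, and $\Delta$ is a simple $d$-polytope with exactly $d+1$ facets, which forces $\Delta$ to be a $d$-simplex. In the dual fan $\Sigma(\Delta)$, every proper subset of the $d+1$ rays spans a cone, so $Z_\Sigma = \C^{d+1}\smallsetminus\{0\}$. Since $G = \Hom(\DG(\beta),\T) \cong \T$, applying $\Hom(-,\T)$ to $\beta^\vee:(\Z^{d+1})^\dual \to \Z$ exhibits the action on $Z_\Sigma$ as $t \cdot z = (t^{b_0}z_0,\ldots,t^{b_d}z_d)$ with $b_i := \beta^\vee(e_i^*)$, so $\mathcal{X}(N,\Sigma(\Delta),\beta)$ is $\calP(b_0, \ldots, b_d)$. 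To identify $(b_0,\ldots,b_d)$ as a generator of $\ker \beta$, the plan is to use the relation $\beta^\vee \circ \beta^\dual = 0$ to conclude $u(\beta(b_0,\ldots,b_d)) = 0$ for all $u \in N^\dual$, thereby placing $\beta(b_0,\ldots,b_d) \in \Tor(N)$. Then I would exploit that $G$ is connected (so $\coker\beta = 0$ by Lemma~\ref{le:GmodG0}) together with the identification $\Tor N \cong \coker\beta^\vee \cong \Z/\gcd(b_i)\Z$ coming from the same exact sequence, in order to force the torsion component of $\beta(b_0,\ldots,b_d)$ to vanish. A rank-one comparison on $\ker \beta$ combined with matching $\gcd$'s then pins down the generator on the nose.

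For the $(\Rightarrow)$ direction, suppose $\mathcal{X}(N,\Sigma(\Delta),\beta) \simeq \calP(b_0,\ldots,b_d)$. The stacky fundamental group of $\calP$ is trivial since its acting group $\T$ is connected and $\C^{d+1}\smallsetminus\{0\}$ is simply connected, so by the Remark following Lemma~\ref{le:GmodG0} we get $\coker\beta = 0$, and hence $\DG(\beta)$ is torsion-free. Comparing the stack-theoretic count of codimension-one divisors (equivalently, the fans of the coarse moduli spaces, the latter of $\calP$ being a weighted projective variety whose fan has $d+1$ rays) forces $\Sigma(\Delta)$ to have $d+1$ rays, so $n = d+1$ and $\rank \DG(\beta) = 1$. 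Combined with torsion-freeness, $\DG(\beta) \cong \Z$. The main obstacle is in the $(\Leftarrow)$ direction: matching $(b_0,\ldots,b_d)$ exactly with a generator of the rank-one lattice $\ker\beta$ requires careful bookkeeping relating the $\gcd$ of the weights to the torsion of $N$, whereas the rest reduces to routine unpacking of the exact sequences of Section~\ref{se:prelim}.
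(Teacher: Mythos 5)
Your proposal is correct in substance and its skeleton for the forward implication ($\DG(\beta)\cong\Z$ $\Rightarrow$ weighted projective) matches the paper's: rank count gives $n=d+1$, so $\Delta$ is a simplex, $Z_{\Sigma(\Delta)}=\C^{d+1}\smallsetminus\{0\}$, $G\cong\T$, and the weights are read off from $\beta^\vee$. Where you genuinely diverge is in identifying $(b_0,\dots,b_d)$ with a generator of $\ker\beta$. The paper avoids your ``gcd bookkeeping'' entirely by first proving, via the Snake Lemma applied to $[B\,Q]$ versus $\beta$, that $\DG(\beta)\cong(\ker\beta)^\dual$ \emph{canonically}; once $(\ker\beta)^\dual$ is identified with $\Z$ using the dual basis of a generator $b$, the matrix of $\beta^\vee$ is automatically $[b_0\,\cdots\,b_d]$, with no further argument. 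Your route --- fix an arbitrary isomorphism $\DG(\beta)\cong\Z$, set $b_i:=\beta^\vee(e_i^*)$, and then prove $b$ generates $\ker\beta$ --- does close, but the two steps you leave as sketches need to be said precisely: from $\coker\beta^\vee\cong\Ext(N,\Z)\cong\Z/g\Z$ with $g=\gcd(b_i)$, the element $\beta(b/g)$ lies in $\Tor(N)$, which has exponent $g$, so $\beta(b)=g\cdot\beta(b/g)=0$; and writing $b=kc$ for a generator $c$ of $\ker\beta$, surjectivity of $\beta$ (Lemma~\ref{le:GmodG0}) gives $N\cong\Z^{d+1}/\langle c\rangle$, whence $|\Tor(N)|=\gcd(c_i)$, forcing $|k|=1$. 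You should also note that positivity of the $b_i$ comes from the simplex relation $\sum b_j\beta(e_j)\otimes 1=0$. Finally, you supply an argument for the converse implication (via the stacky fundamental group and a ray count on coarse spaces) that the paper's proof of this proposition omits, deferring it in effect to the covering-space argument in Proposition~\ref{prop:fakeWPS}; your ray-counting step is the informal part there and would need to be pinned down (e.g.\ via $b_2$ of the coarse moduli space), but the idea is sound.
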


\begin{remark} \label{remark:cyclictorsion} {\rm 
Let $(N,\Delta, \beta)$ be a stacky polytope satisfying the condition $\DG(\beta)\cong \Z$. By Lemma \ref{le:GmodG0}, it follows that $\beta$ must be surjective.  Additionally, the torsion submodule $\Tor(N)$ of $N$ must be cyclic, and the proof of Proposition \ref{prop:WPS} shows that the order of $\Tor(N)$ is $g=\gcd(b_0, \ldots, b_d)$. }
\end{remark}

\begin{proof}[Proof of Proposition~\ref{prop:WPS}]
Since $\DG(\beta) \cong \Z$, the homomorphism $\beta$ of the stacky polytope $(N,\Delta,\beta)$ must have domain $\Z^{d+1}$ where (as usual) $\rank N=d$.  That is the polytope $\Delta$ has $d+1$ facets, and is therefore a simplex.
  Hence   $V(J(\Sigma(\Delta)))=\{0\}$ and $Z_{\Sigma(\Delta)} = \C^{d+1}\smallsetminus \{0\}$.  

We determine the $G$-action on $\C^{d+1}\smallsetminus \{0\}$.  Recall that the action is determined by applying $\Hom(-,\T)$ to the map $\beta^\vee: (\Z^{d+1})^\dual\rightarrow \DG(\beta)$, obtaining a homomorphism $G\to \T^{d+1}$.  We set out to determine $\beta^\vee$.

We first show that $\DG(\beta) \cong (\ker \beta)^\dual$. Let $0 \to \Z^\ell \stackrel{Q}{\longrightarrow} \Z^{d+\ell} \to N \to 0$ be a free resolution of $N$, and let $B:\Z^{d+1} \to \Z^{d+\ell}$ denote a lift of $\beta$. Then $\DG(\beta)=\coker[B\, Q]^*$. Similar to the proof of Proposition 2.2 in \cite{BCS05}, an application of the snake lemma to the diagram
$$
\xymatrix{
0 \ar[r] & \Z^\ell \ar@{=}[d] \ar[r] & \Z^{d+1+\ell}\ar[d]^{[B \, Q]} \ar[r] & \Z^{d+1} \ar[d]^{\beta} \ar[r] & 0 \\
0 \ar[r] & \Z^\ell \ar[r]^{Q} & \Z^{d+\ell} \ar[r] & N \ar[r] & 0
}
$$
shows that $[B\, Q]$ and $\beta$ have isomorphic cokernels, which are trivial by assumption, as well as isomorphic kernels.  It  follows that $\coker [B\, Q]^\dual \cong (\ker \beta)^\dual$.

Since $\beta: \Z^{d+1}\rightarrow N$ has finite cokernel, it follows there is an exact sequence
\begin{equation}\label{eq:exact}
\xymatrix{
0 \ar[r] &{(N^\dual)} \ar[r]^{\beta^\dual} & {(\Z^{d+1})^\dual} \ar[r]^{\beta^{\vee}} & {(\ker \beta)^\dual} \ar[r] 
& \Ext(N,\Z) \ar[r] & 0
}
\end{equation}
obtained by the identification $\DG(\beta)\cong (\ker\beta)^\dual$, where exactness at $\Ext(N,\Z)$ follows from the fact that $\Ext(N,\Z)$ is the first right derived functor of $\Hom(-,\Z)$ (and $N$ is finitely generated).

Since $\Delta$ is a simplex, we can find positive integers $b_0, \ldots, b_d$ such that $\sum_{j=0}^{d} b_j \beta(e_j)\otimes 1 =0$, where $\{e_0, \ldots e_d\}$ is the standard basis for $\Z^{d+1}$.  Without loss of generality, assume that $b=(b_0, \ldots, b_d)$ generates $\ker \beta$, which we now identify with $\Z$ according to $b \leftrightarrow 1$.
  The resulting identification $(\ker \beta)^\dual \cong \Z^\dual$ in ~\eqref{eq:exact} 
 gives $\beta^\vee$ the matrix representation $[b_0 \, \cdots \, b_d]$.  It follows that $\beta^\vee$ induces the homomorphism $\T\to \T^{d+1}$, $t\mapsto (t^{b_0}, \ldots, t^{b_d})$, which completes the proof.
\end{proof}

\subsection{Fake weighted projective stacks}\label{section:fWPS}\label{se:fwps}
Analogous to the fake weighted projective spaces,
in this section we consider a \emph{fake weighted projective stack}, a \emph{stack} quotient $\mathcal{W}/\Lambda$ where $\Lambda$ is a finite Abelian group acting (in the sense of group actions on stacks \cite{Romagny2005}, \cite{LermanMalkin2009}) on a weighted projective stack $\mathcal{W}=\mathcal{P}(b_0, \ldots, b_d)$. 
 Specifically, we characterize fake weighted projective stacks in terms of their associated combinatorial data.
 
By Proposition \ref{prop:WPS}, the toric DM stack associated to a stacky fan $(N,\Sigma,\beta)$ satisfying $\DG(\beta) \cong \Z$ is a weighted projective stack.  If we require only that $\rank \DG(\beta)=1$ (i.e. allowing $\DG(\beta)$  with torsion), the next Proposition shows that the resulting toric DM stack is a \emph{fake} weighted projective stack.  (Compare with \cite[Lemma 2.11]{Batyrev-Cox}.)  Note that the proof of the implication (\ref{fwps}) $\Rightarrow$ (\ref{rank})  of the proposition relies more heavily on the language of stacks, which in the interest of brevity will not be reviewed.  The reader may wish to consult the indicated references.

\begin{proposition} \label{prop:fakeWPS}
Let $(N, \Delta, \beta)$ be a stacky polytope, and let $\Sigma(\Delta)$ be the dual fan to $\Delta$. 
The following statements are equivalent:
\begin{enumerate}
\item the associated toric DM stack $\mathcal{X}(N,\Sigma(\Delta),\beta)$ is equivalent to a fake weighted projective stack; \label{fwps}
\item $\rank \DG(\beta) =1$ (i.e. $\dim G =1$); \label{rank}
\item $\Delta$ is a simplex. \label{simplex}
\end{enumerate}
Under any of the above conditions, $\mathcal{X}(N,\Sigma(\Delta),\beta)\cong \mathcal{P}(b_0, \ldots, b_d)/\Lambda$, where the weights are determined by the condition that $(b_0,\dots, b_d)$ generates $\ker \beta \subset \Z^{d+1}$.
\end{proposition}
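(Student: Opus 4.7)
The plan is to establish (2) $\Leftrightarrow$ (3) by a combinatorial rank computation, (3) $\Rightarrow$ (1) by explicitly realizing $\mathcal{X}(N,\Sigma(\Delta),\beta)$ as a fake weighted projective stack using the global-quotient framework of Section~\ref{sec:globquot}, and finally (1) $\Rightarrow$ (3) via a coarse-moduli argument. For (2) $\Leftrightarrow$ (3), the snake-lemma argument in the proof of Proposition~\ref{prop:WPS} shows $\rank\DG(\beta) = \rank\ker\beta$ whenever $\coker\beta$ is finite; since $\beta\colon \Z^n \to N$ has finite cokernel and $\rank N = d$, this rank equals $n-d$. Because a simple $d$-polytope has $n \geq d+1$ facets, with equality precisely when $\Delta$ is a simplex, conditions (2) and (3) are equivalent.

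For (3) $\Rightarrow$ (1), I would apply Proposition~\ref{prop:stackyfanG0} to the submodule $N_0 = \im\beta$, producing a stacky fan $(N_0, \Sigma_0, \beta_0)$ with $[Z_\Sigma/G_0] = \mathcal{X}(N_0, \Sigma_0, \beta_0)$. By construction $\beta_0$ is surjective, so Lemma~\ref{le:GmodG0} implies that $G_0 = \Hom(\DG(\beta_0),\T)$ is connected; hence $\DG(\beta_0)$ is torsion-free. Combined with the rank computation (now applied to $\beta_0$), which gives $\rank\DG(\beta_0) = 1$, this yields $\DG(\beta_0) \cong \Z$. Proposition~\ref{prop:WPS} then identifies $[Z_\Sigma/G_0] \simeq \mathcal{P}(b_0,\ldots,b_d)$, with weights $(b_0,\ldots,b_d)$ generating $\ker\beta_0 = \ker\beta$. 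Since $\Lambda := G/G_0 \cong \coker\beta$ is finite by Lemma~\ref{le:GmodG0}, the standard iterated-quotient equivalence $[Z_\Sigma/G] \simeq [[Z_\Sigma/G_0]/\Lambda]$ (arising from the short exact sequence $1 \to G_0 \to G \to \Lambda \to 1$) exhibits $\mathcal{X}(N, \Sigma(\Delta), \beta)$ as the fake weighted projective stack $\mathcal{P}(b_0,\ldots,b_d)/\Lambda$.

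For (1) $\Rightarrow$ (3), I would argue via coarse moduli: equivalent DM stacks have isomorphic coarse moduli spaces. The coarse moduli of $\mathcal{X}(N,\Sigma(\Delta),\beta) = [Z_\Sigma/G]$ is the classical projective simplicial toric variety $X_{\Sigma(\Delta)}$, whose Picard group has rank $n - d$, whereas the coarse moduli of $\mathcal{P}(b_0,\ldots,b_d)/\Lambda$ is by definition a fake weighted projective space in the sense of \cite{Bu02, Ka09}, of Picard rank $1$. Hence $n - d = 1$, so $\Delta$ has $d+1$ facets and is a simplex.

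The main obstacle will be the (1) $\Rightarrow$ (3) direction: identifying the coarse moduli of a DM stack over $\mathsf{Diff}$ and establishing its invariance under stack equivalence genuinely requires stack-theoretic machinery beyond what the paper has developed, which is presumably the content the authors flag in their remark that this proof ``relies more heavily on the language of stacks.'' A secondary subtlety is the iterated-quotient identification in (3) $\Rightarrow$ (1): while morally a standard fact about quotients by group extensions, in the $\mathsf{Diff}$-category setting it requires careful formulation of the $\Lambda$-action on the stack $[Z_\Sigma/G_0]$ in the sense of \cite{Romagny2005, LermanMalkin2009} and verification that the resulting 2-categorical quotient is equivalent to $[Z_\Sigma/G]$.
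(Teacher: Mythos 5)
Your handling of (2)$\Leftrightarrow$(3) and of (2)$\Rightarrow$(1) matches the paper almost exactly: the paper also dismisses (2)$\Leftrightarrow$(3) as immediate (your rank count $\rank\DG(\beta)=n-d$ is the content of that remark), and its proof of the forward direction is precisely your chain Proposition~\ref{prop:stackyfanG0} $\rightarrow$ $\DG(\beta_0)\cong\Z$ $\rightarrow$ Proposition~\ref{prop:WPS} $\rightarrow$ the iterated-quotient equivalence $[Z_\Sigma/G]\cong[Z_\Sigma/G_0]/\Lambda$ cited from \cite{LermanMalkin2009}. Where you genuinely diverge is the converse. The paper argues via covering space theory for stacks: it shows the quotient map $\mathcal{W}\to\mathcal{W}/\Lambda$ is a covering projection in the sense of \cite{Noohi:2005}, notes that $\mathcal{W}=\calP(b_0,\ldots,b_d)$ has trivial stacky fundamental group, concludes that $\mathcal{W}$ is the universal cover, and invokes \cite{HaradaKrepski:2011} to identify that universal cover with $\mathcal{X}(N_0,\Sigma_0,\beta_0)$, whence $\rank\DG(\beta)=\rank\DG(\beta_0)=1$ by Proposition~\ref{prop:WPS}. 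Your route instead passes to coarse moduli spaces and compares a rank-one invariant. Both require stack-theoretic input beyond what the paper develops (as the authors themselves flag); the paper's version has the advantage of staying entirely inside the circle of ideas already set up in Section~\ref{sec:globquot} (universal covers, $\pi_1\cong\coker\beta$), while yours trades that for a more classical topological computation on the quotient spaces.

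Two soft spots in your converse direction should be repaired before it is airtight. First, over $\mathsf{Diff}$ the coarse space of $[Z_\Sigma/G]$ is only a topological space, so ``Picard rank'' is not an invariant you are entitled to; you should replace it by $\dim_\Q H^2(\,\cdot\,;\Q)$, which for the (projective, simplicial) toric variety underlying $[Z_\Sigma/G]$ does equal $n-d$. Second, it is not true ``by definition'' that the coarse space of $\calP(b_0,\ldots,b_d)/\Lambda$ is a fake weighted projective space in the sense of \cite{Bu02, Ka09}: in the paper's Definition the finite group $\Lambda$ acts on the \emph{stack} $\mathcal{W}$ in the sense of \cite{Romagny2005, LermanMalkin2009}, not necessarily through toric automorphisms, so the coarse quotient $|\mathcal{W}|/\Lambda$ need not be a toric variety at all. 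The argument survives because the transfer isomorphism gives $H^2(|\mathcal{W}|/\Lambda;\Q)\cong H^2(|\mathcal{W}|;\Q)^\Lambda$, which has dimension at most $1$; combined with $n-d\geq 1$ (a simple $d$-polytope has at least $d+1$ facets) this still forces $n=d+1$. With those adjustments your alternative proof of (1)$\Rightarrow$(3) goes through.
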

\begin{proof} The equivalence of (\ref{rank}) and (\ref{simplex}) is immediate. 
Suppose $(N,\Delta,\beta)$ is a stacky polytope with $\rank \DG(\beta)=1$.
By \cite{LermanMalkin2009}, $\mathcal{X}(N,\Sigma(\Delta),\beta) =[Z_{\Sigma}/G] \cong [Z_{\Sigma}/G_0]/{\Lambda}$, where $\Lambda=G/G_0$ and $G_0$ denotes the connected component of the identity element in $G$.  By Proposition \ref{prop:stackyfanG0}, $[Z_{\Sigma}/G_0]$ is the toric DM stack associated to the stacky fan $(N_0,\Sigma(\Delta),\beta_0)$, which satisfies $\DG(\beta_0)\cong \Z$ and is thus a weighted projective stack, by Proposition \ref{prop:WPS}.

Conversely, suppose $\mathcal{X}(N,\Sigma(\Delta),\beta)$ is equivalent to a fake weighted projective stack $\mathcal{W}/\Lambda$ with $\mathcal{W}= \calP(b_0, \ldots, b_d)$.  We shall verify below that the quotient map of stacks $\mathcal{W} \to \mathcal{W}/\Lambda$ is a covering projection---i.e. a representable map of stacks such that every representative is a covering projection (see \cite{Noohi:2005}).  Since $\mathcal{W}$ has trivial (stacky) fundamental group, $\mathcal{W}$ is then the universal cover, which by \cite{HaradaKrepski:2011}, coincides with $\mathcal{X}(N_0,\Sigma_0,\beta_0)$.  Therefore $\rank \DG(\beta)=\rank \DG(\beta_0)=1$ by Proposition \ref{prop:WPS}.  The statement about the weights is also a direct consequence of Proposition \ref{prop:WPS}.

To see that $p:\mathcal{W} \to \mathcal{W}/\Lambda$ is a covering projection, it is enough to show that the base extension of $p$ along a presentation (also called chart or atlas) is a covering projection of topological spaces (since coverings are invariant under base change and local on the target---see \cite[Example 4.6]{Noohi:2005}).  Choose a $\Lambda$-atlas $X_1 \rightrightarrows X_0$ for $\mathcal{W}$ with $\Lambda$ acting freely on $X_1$ and $X_0$ (see \cite{LermanMalkin2009}).  Then $X_1/\Lambda \rightrightarrows X_0 /\Lambda$ is an atlas for the quotient $\mathcal{W}/\Lambda$.  It is straightforward to check that $X_0/\Lambda \times_{\mathcal{W}/\Lambda} \mathcal{W}\cong X_0$ (e.g. see the discussion of $2$-fiber products in \cite[Section 9]{Noohi:2005}), and hence the base extension of $p$ is $X_0 \to X_0/\Lambda$, which is a covering projection.
\end{proof}

\section{Labelled sheared simplices}
\label{se:shimplexWPS}

 We now introduce  \emph{labelled sheared simplices}, a natural family of stacky polytopes that includes many (but not all) stacky polytopes associated weighted and fake weighted projective stacks. We begin with the definition.

\begin{definition} \label{def:labelledshimplex}
Let  $\mathbf{a} = (a_1, \ldots, a_d) \in \Z^d$ be a primitive vector in the  positive orthant and $\{\epsilon_i\}$ the standard basis of $\R^d$. The \emph{sheared simplex} $\Delta(\mathbf{a})$ is  the convex hull  of the origin together with the points $\frac{\lcm(\mathbf{a})}{a_j}\epsilon_j$ $(j=1,\ldots,d)$ in $\R^d$.  Given positive integers $m_0, \ldots, m_d$, we define a \emph{labelled sheared simplex} as the stacky polytope $(\Z^d, \Delta(\mathbf{a}), \beta)$, where the homomorphism  $\beta:\Z^{d+1} \to \Z^d$ is given by $\beta(e_0)=-m_0 \mathbf{a}$ and $\beta(e_j)=m_j\epsilon_j$ ($j=1, \ldots, d$), where $e_0, \ldots, e_d$ denote the standard basis vectors for $\Z^{d+1}$.
\end{definition}

We first note that 
Proposition \ref{prop:fakeWPS} immediately implies that, for a labelled sheared simplex  $(\Z^d, \Delta(\mathbf{a}), \beta)$, the associated toric DM stack $[Z_{\Sigma(\Delta)}/G]$ is indeed a fake weighted projective stack.
The concrete combinatorial description of these simplices provide an interesting explicit class of examples 
of fake projective stacks to study.

Since all labelled sheared simplices give rise to fake weighted projective stacks, we
first identify, in Section~\ref{subsec:shimplexWPS}, precisely which labelled sheared simplices correspond to weighted projective stacks.
We analyze the fake weighted projective stacks arising from labelled sheared simplices more generally in Section~\ref{se:sheared}, and finally, we give a detailed analysis of 2-dimensional labelled sheared simplices in Section~\ref{se:2dimsheared}.

\subsection{Labelled sheared simplices corresponding to weighted projective stacks}\label{subsec:shimplexWPS}

The main result of this section is Proposition~\ref{prop:shimplexWPS}, in which we identify those labelled sheared simplices which 
give rise to a weighted projective stack.
By Proposition \ref{prop:WPS}, this involves translating the condition $\DG(\beta) \cong \Z$ in terms of the labels $\{ m_0, \ldots, m_d\}$ and the vector $\mathbf{a}$.   Since $\rank \DG(\beta)=1$,  $\DG(\beta) \cong \Z$ if and only if  $G$ is connected; therefore, we begin with the following lemma.

\begin{lemma}\label{prop:shimplexcomponentgroup} Let $(\Z^d,\Delta(\mathbf{a}),\beta)$  be a labelled sheared simplex with labels $\{m_0, \ldots, m_d\}$, and let $[Z_\Sigma/G]$ be its associated toric DM stack. If $G_0 \subset G$ denotes the connected component of the identity element, then 
$$
G/G_0 \cong \left[{\bigoplus_{i=0}^d \Z/m_i\Z }\right] \Big/ \langle (1\, \mathrm{mod}\, m_0, a_1\,\mathrm{mod}\, m_1, \ldots, a_d \,\mathrm{mod}\, m_d) \rangle
$$
\end{lemma}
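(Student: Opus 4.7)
The plan is to invoke Lemma \ref{le:GmodG0}, which identifies $G/G_0$ with $\coker \beta$, and then to massage this cokernel into the form stated in the lemma by a short manipulation of generators and relations.

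First I will factor the map $\beta : \Z^{d+1} \to \Z^d$ as $\beta = \psi \circ \alpha$, where $\alpha : \Z^{d+1} \to \Z^{d+1}$ is the diagonal map $\mathrm{diag}(m_0, m_1, \ldots, m_d)$ and $\psi: \Z^{d+1} \to \Z^d$ is defined by $\psi(e_0) = -\mathbf{a}$ and $\psi(e_j) = \epsilon_j$ for $j = 1, \ldots, d$. A direct check against Definition \ref{def:labelledshimplex} confirms that this factorization reproduces the formulas $\beta(e_0) = -m_0 \mathbf{a}$ and $\beta(e_j) = m_j \epsilon_j$.

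Next I will observe that $\psi$ is surjective (its image contains each $\epsilon_j$) and compute its kernel directly: if $\psi(x_0, x_1, \ldots, x_d) = 0$, then $x_j = a_j x_0$ for each $j \geq 1$, so $\ker \psi = \Z \cdot (1, a_1, \ldots, a_d)$. The third isomorphism theorem then yields
$$
\coker \beta \;=\; \Z^d / \psi(\im \alpha) \;\cong\; \Z^{d+1} / \bigl(\ker \psi + \im \alpha\bigr) \;=\; \Z^{d+1} \big/ \langle m_0 e_0, m_1 e_1, \ldots, m_d e_d, \, (1, a_1, \ldots, a_d) \rangle.
$$

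Finally, I will perform this quotient in two stages: first mod out by the diagonal relations $m_i e_i$ to identify $\Z^{d+1}/\langle m_0 e_0, \ldots, m_d e_d\rangle$ with $\bigoplus_{i=0}^d \Z/m_i\Z$, and then quotient by the image of the remaining generator $(1, a_1, \ldots, a_d)$, which becomes $(1 \,\mathrm{mod}\, m_0, a_1 \,\mathrm{mod}\, m_1, \ldots, a_d \,\mathrm{mod}\, m_d)$. Combined with Lemma \ref{le:GmodG0}, this gives the desired description of $G/G_0$. I do not expect any serious obstacle; the only subtle point is the verification that $\ker \psi$ is free of rank one with the claimed generator, and this is immediate.
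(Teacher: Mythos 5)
Your proof is correct and follows essentially the same route as the paper: both reduce to computing $\coker\beta$ via Lemma \ref{le:GmodG0} and factor $\beta$ through the diagonal matrix $\mathrm{diag}(m_0,\ldots,m_d)$ followed by the ``unlabelled'' map (your $\psi$, the paper's $\beta'$), whose kernel is generated by $(1,a_1,\ldots,a_d)$. The only difference is one of packaging: the paper extracts the cokernel from the Snake Lemma applied to the corresponding diagram of short exact sequences, while you obtain the same presentation of $\coker\beta$ directly from the third isomorphism theorem.
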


\begin{example}
Consider the toric DM stack from Example \ref{eg:segment}, the labelled line segment with labels $r$ and $s$, where it was seen directly that $G \cong \T\times \mu_g$, where $g=\gcd(r,s)$ and $\mu_g \subset \T$ is the cyclic group of $g$-th roots of unity.  Hence $G/G_0 \cong \mu_g$.  This can be seen from the Lemma above as well, since
$
G/G_0 \cong (\Z/r\Z \oplus \Z/s\Z) / H
$
where $H$ is the subgroup generated by $(1,1)$.  
\end{example}

\begin{proof}[Proof of Lemma \ref{prop:shimplexcomponentgroup}]
By Lemma \ref{le:GmodG0}, $G/G_0 \cong \coker \beta$, which we now compute.  Consider the commutative diagram of short exact sequences
$$\xymatrixcolsep{5pc}
\xymatrix{
0 \ar[r] & \Z^{d+1} \ar[r]^{\times(m_0,\dots, m_d)} \ar[d]^{\beta} &  \Z^{d+1} \ar[r] \ar[d]^{\beta'} & {\bigoplus_{i=0}^d \Z/m_i\Z } \ar[r] \ar[d] & 0 \\
0 \ar[r] & \Z^d \ar@{=}[r] & \Z^d \ar[r] & 0 \ar[r] & 0
}
$$ where $\beta'$ may be written as the matrix $\begin{bmatrix}
-a_1 &1 & & 0 \\
\vdots & & \ddots & \\
-a_d & 0& & 1
\end{bmatrix}.$
Applying the Snake Lemma gives the   exact sequence
$$
0\to \ker \beta \to \ker \beta' \to  {\bigoplus_{i=0}^d \Z/m_i\Z } \to \coker \beta \to 0.
$$
Since $\ker \beta' \cong \Z$ is generated by $(1, a_1, \ldots, a_d) \in \Z^{d+1}$, the  second map in the sequence above sends the generator to $(1\, \mathrm{mod}\, m_0, a_1\,\mathrm{mod}\, m_1, \ldots, a_d \,\mathrm{mod}\, m_d)$. The result follows.
\end{proof}

\begin{proposition} \label{prop:shimplexWPS}
Let    $(\Z^d, \Delta(\textbf{a}), \beta)$ be a labelled sheared simplex with labels $\{m_0, \ldots, m_d\}$ and $\mathbf{a}=(a_1, \ldots, a_d)$. Let $M=m_0m_1 \cdots m_d$.
The following conditions are equivalent:
\begin{enumerate}
\item[(0)] $\DG(\beta)\cong \Z$;
\item[(1)] $\mathcal{X}(\Z^d,\Delta(\mathbf{a}),\beta)$ is equivalent to a weighted projective stack;
\item[(2)] $\displaystyle \gcd\left( \frac{M}{m_0}, \frac{Ma_1}{m_1}, \ldots, \frac{Ma_d}{m_d} \right)=1$;
\item[(3)] $\gcd(m_i, m_j) = 1$ for all $i \neq j$ in $\{0, \ldots, d\}$, and $\gcd(a_i, m_i) = 1$, for all $i$ in $\{1,\ldots,d\}$.
\end{enumerate}
In addition, if one of the above holds, then $\mathcal{X}(\Z^d,\Delta(\mathbf{a}),\beta) = \calP\left( \frac{M}{m_0}, \frac{Ma_1}{m_1}, \ldots, \frac{Ma_d}{m_d} \right)$.
\end{proposition}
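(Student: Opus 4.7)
The plan is to exploit Proposition \ref{prop:WPS} and Lemma \ref{prop:shimplexcomponentgroup}, together with an elementary number-theoretic analysis. First, (0) $\Leftrightarrow$ (1) is exactly Proposition \ref{prop:WPS}, which also tells us that when (0) holds, the weights $(b_0, \ldots, b_d)$ of the resulting weighted projective stack may be read off from any generator of $\ker \beta \subset \Z^{d+1}$. For (0) $\Leftrightarrow$ (3), I note that $\Delta(\mathbf{a})$ is a simplex, so Proposition \ref{prop:fakeWPS} gives $\rank \DG(\beta) = 1$; hence (0) is equivalent to $G$ being connected. By Lemma \ref{prop:shimplexcomponentgroup}, this is in turn equivalent to the element $\omega := (1 \bmod m_0, a_1 \bmod m_1, \ldots, a_d \bmod m_d)$ generating $\bigoplus_{i=0}^{d} \Z/m_i\Z$. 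Writing $n_0 = m_0$ and $n_i = m_i/\gcd(a_i, m_i)$ for $i \geq 1$, the order of $\omega$ is $\lcm(n_0, \ldots, n_d)$ while the group has order $M$. Since $\lcm(n_0, \ldots, n_d) \leq \prod_i n_i \leq M$, with the first inequality an equality iff the $n_i$ are pairwise coprime and the second iff $\gcd(a_i, m_i) = 1$ for each $i \geq 1$, equality throughout is precisely condition (3).

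For (2) $\Leftrightarrow$ (3), a direct prime-by-prime analysis suffices. Write $b_0 = M/m_0 = \prod_{j \geq 1} m_j$ and $b_i = M a_i / m_i = a_i \, m_0 \prod_{j \geq 1,\, j \neq i} m_j$ for $i \geq 1$. Assuming (3), if a prime $p$ divides every $b_k$, then $p \mid b_0$ forces $p \mid m_j$ for some $j \geq 1$; pairwise coprimality gives $p \nmid m_0$ and $p \nmid m_i$ for $i \neq j$, and $\gcd(a_j, m_j) = 1$ gives $p \nmid a_j$, so $p \nmid b_j$---a contradiction. Conversely, if (3) fails---either because some $\gcd(m_i, m_j) > 1$ for $i \neq j$ or some $\gcd(a_i, m_i) > 1$ for $i \geq 1$---inspection of the factors appearing in each $b_k$ produces a common prime divisor, and (2) fails.

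Finally, for the explicit formula, the vector $\mathbf{v} = (M/m_0, M a_1/m_1, \ldots, M a_d/m_d)$ lies in $\ker \beta$ since $\beta(\mathbf{v}) = -M\mathbf{a} + M\sum_{i=1}^{d} a_i \epsilon_i = 0$. Under (2), $\mathbf{v}$ is primitive in $\Z^{d+1}$, and because $\ker \beta$ is a rank-$1$ direct summand of $\Z^{d+1}$ (the quotient $\Z^{d+1}/\ker \beta \cong \im \beta$ being torsion-free), $\mathbf{v}$ must generate $\ker \beta$. Proposition \ref{prop:WPS} then identifies the stack as $\calP(M/m_0, M a_1/m_1, \ldots, M a_d/m_d)$. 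The main obstacle I anticipate is not conceptual but organizational: the case analysis for (2) $\Leftrightarrow$ (3) requires careful tracking of the asymmetric role of index $0$, since $m_0$ appears as a factor of $b_i$ for $i \geq 1$ but not of $b_0$.
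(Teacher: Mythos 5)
Your proposal is correct and follows essentially the same route as the paper: Proposition~\ref{prop:WPS} for (0) $\Leftrightarrow$ (1) and the identification of the weights, and Lemma~\ref{prop:shimplexcomponentgroup} to reduce (0) $\Leftrightarrow$ (3) to whether $\omega$ generates $\bigoplus_{i}\Z/m_i\Z$. Your lcm-versus-product computation of the order of $\omega$, your prime-by-prime check of (2) $\Leftrightarrow$ (3), and your verification that $(M/m_0, Ma_1/m_1, \ldots, Ma_d/m_d)$ is a primitive element of $\ker\beta$ are minor variants of (and in places more detailed than) the paper's argument, which instead passes through an isomorphism $\bigoplus_i \Z/m_i\Z \cong \Z/M\Z$ and leaves (2) $\Leftrightarrow$ (3) to the reader.
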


\begin{proof}
The equivalence of conditions (0) and (1) is part of Proposition \ref{prop:WPS}, which also gives the identification of the weights.

Let $(\Z^d, \Delta(\textbf{a}), \beta)$ be a labelled sheared simplex, with labels $\{m_0, \ldots, m_d\}$ and $\mathbf{a}=(a_1,\ldots, a_d)$. Since the rank of $\DG(\beta)$  is 1, by Lemma~\ref{le:GmodG0}, condition (0) is equivalent to $\coker \beta=0$. 
By Lemma~\ref{prop:shimplexcomponentgroup}, $\coker \beta=0$ if and only if 
$$
H = \langle (1\, \mathrm{mod}\, m_0, a_1\,\mathrm{mod}\, m_1, \ldots, a_d \,\mathrm{mod}\, m_d) \rangle
$$
is equal to $\oplus_i \Z/m_i\Z$, or equivalently if  $H$ is cyclic of order $M=m_0m_1\cdots m_d$.  If (3) holds, this is immediate.

Conversely, suppose $H$ is cyclic of order $M$, and hence equals  $\oplus_i \Z/m_i\Z$. Therefore, $\gcd(m_i,m_j)=1$ for $i\neq j$. Consider the isomorphism $\oplus_i \Z/m_i\Z \to \Z/M\Z$ defined by sending $f_i \mapsto M/m_i$, where $f_i$ denotes the element whose only non-zero component is $1 \, \mathrm{mod} \, m_i$ in the $i$th component.  Under this isomorphism, the generator of $H$ is sent to $b=(M/m_0 + Ma_1/m_1 + \cdots + Ma_d/m_d) \, \mathrm{mod}\, M$, which has order $M/\gcd(M,b)=M$ since $H$ has order $M$.  Therefore, $\gcd(M,b)=1$ and hence $\gcd(m_j,a_j)=1$, which shows (3).

The equivalence of conditions (2) and (3) is straightforward to verify.
\end{proof}

Proposition \ref{prop:shimplexWPSisotropy} 
below describes the isotropy groups for weighted projective stacks that correspond to labelled sheared simplices.  For a weighted projective stack $\calP(b_0, \ldots, b_d)$, the resulting isotropy groups are straightforward to compute directly from the defining action of $\T$.   Namely, the isotropy of a point $z \in \C^{d+1} \smallsetminus\{0\}$  is easily seen to be cyclic of order $\gcd(b_j: z_j \neq 0)$.
 For those weighted projective stacks arising from labelled sheared simplices, we may use Proposition \ref{prop:shimplexWPS} to express this in terms of the labels $\{m_0, \ldots, m_d \}$ and $\mathbf{a}$.

\begin{proposition} \label{prop:shimplexWPSisotropy}
Suppose that $(\Z^d, \Delta(\textbf{a}), \beta)$ is a labelled sheared simplex
with labels $\{m_0, m_1, \ldots, m_{d}\}$ that corresponds to a
weighted projective stack. Let $z=(z_0, \ldots, z_d) \in \C^{d+1} \smallsetminus \{0\}$, and set $d_z=\mathrm{gcd}(a_j : z_j \neq 0)$, where $a_0$ is set to $1$.
 If $z_0\neq 0$,
 then  $\stab(z) \cong {\bigoplus_{i\in I_z} \Z/m_i \Z}$. If 
 $z_0=0$,
 then $\stab(z) \cong \Z/(m_z d_z)\Z$, where $m_z=\prod_{i\in I_z} m_i$.
\end{proposition}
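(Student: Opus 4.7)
The plan is to reduce to the well-known isotropy formula for weighted projective stacks and then compute a gcd by prime-by-prime analysis, using the coprimality conditions that characterize when a labelled sheared simplex gives a (honest, not fake) weighted projective stack.

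First, I would invoke Proposition \ref{prop:shimplexWPS} to write the ambient stack explicitly as $\calP(b_0, b_1, \ldots, b_d)$ with
\[
b_0 = \frac{M}{m_0}, \qquad b_j = \frac{M a_j}{m_j} \quad (j=1,\ldots,d).
\]
A convenient unified form is $b_j = a_j \prod_{i \neq j} m_i$ with the convention $a_0 = 1$. Along with this identification, Proposition \ref{prop:shimplexWPS}(3) supplies the two crucial coprimality facts that I will use repeatedly: $\gcd(m_i, m_j) = 1$ for $i \neq j$, and $\gcd(a_i, m_i) = 1$ for $i \geq 1$.

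Next, I would use the standard fact (quoted in the paragraph preceding the proposition, and easily verified directly from the $\T$-action) that for $z \in \C^{d+1}\setminus\{0\}$ the isotropy in $\calP(b_0,\ldots,b_d)$ is cyclic of order $g_z := \gcd\{b_j : j \in J_z\}$. The heart of the proof is then a prime-by-prime computation of $g_z$. Fix a prime $p$. By coprimality of the $m_i$, at most one $m_k$ is divisible by $p$. I consider three subcases: (i) $p$ divides no $m_k$, so $v_p(b_j) = v_p(a_j)$ and hence $v_p(g_z) = \min_{j\in J_z} v_p(a_j) = v_p(d_z)$; (ii) $p \mid m_k$ for some $k \in J_z$, whence $v_p(b_k) = v_p(a_k) = 0$ (using $\gcd(a_k,m_k)=1$) and $v_p(g_z) = 0$; (iii) $p \mid m_k$ for some $k \in I_z$, in which case $v_p(b_j) = v_p(a_j) + v_p(m_k)$ for every $j \in J_z$, giving $v_p(g_z) = v_p(m_k) + v_p(d_z)$. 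In each case one checks $v_p(g_z) = v_p(m_z) + v_p(d_z)$, so $g_z = m_z d_z$.

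Finally, I would translate this unified computation into the two cases in the statement. If $z_0 = 0$, this directly gives $\stab(z) \cong \Z/(m_z d_z)\Z$. If $z_0 \neq 0$, then $0 \in J_z$ forces $d_z \mid a_0 = 1$, hence $d_z = 1$ and $g_z = m_z = \prod_{i \in I_z} m_i$; applying CRT (which is available precisely because the $m_i$'s are pairwise coprime) yields $\Z/m_z\Z \cong \bigoplus_{i \in I_z} \Z/m_i\Z$.

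The main obstacle is the bookkeeping in the prime-by-prime step, specifically ensuring that the single prime $p$ which may divide some $m_k$ is correctly separated according to whether that $k$ lies in $I_z$ or $J_z$, since these two subcases yield sharply different contributions to $v_p(g_z)$. Apart from this, the argument is a direct translation of the coprimality conditions of Proposition \ref{prop:shimplexWPS}(3) through the formula $g_z = \gcd\{a_j \prod_{i\neq j} m_i : j \in J_z\}$.
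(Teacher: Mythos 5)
Your proposal is correct and follows essentially the same route the paper takes: the paper's (implicit) proof is exactly the reduction, via Proposition~\ref{prop:shimplexWPS}, to the fact that the isotropy in $\calP(b_0,\ldots,b_d)$ is cyclic of order $\gcd(b_j : z_j \neq 0)$, with the remaining gcd arithmetic left to the reader. Your prime-by-prime verification that $\gcd\{a_j\prod_{i\neq j}m_i : j\in J_z\} = m_z d_z$, together with the CRT splitting in the $z_0\neq 0$ case, correctly supplies the details the paper omits.
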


The stacky polyope $(\Z^2,\Delta,\beta)$ introduced in
 Example~\ref{ex:GconnNfreeWPS} is a labelled sheared simplex $\Delta(1,1)$
 with labels $\{2,3,5\}$.  By Proposition~\ref{prop:shimplexWPSisotropy}, the
 associated toric DM stack is a weighted projective stack; therefore, we may
 use Proposition~\ref{prop:shimplexWPSisotropy} to reproduce the calculation
 in Example~\ref{eg:WPSisotropycalc}.

\begin{remark}
The results in this section depend only on the dual fan $\Sigma(\Delta)$, which consists of all cones  $\sigma$ generated by  any subset of the ray generators $\{{\bf a}, \epsilon_1,\ldots, \epsilon_d\}$.   
\end{remark}

\subsection{On fake weighted projective stacks arising from labelled sheared simplices}\label{se:sheared}

In this section we study labelled sheared simplices more generally. Specifically, we analyze the isotropy groups of the corresponding fake weighted projective stacks in Proposition~\ref{prop:shimplexisotropy}, and we characterize the labelled sheared simplices giving rise to global quotient stacks in Proposition~\ref{prop:shimplexglobalquotient}.

We begin with a result on isotropy groups.

\begin{proposition} \label{prop:shimplexisotropy} Let $(\Z^d,\Delta(\mathbf{a}),\beta)$  be a labelled sheared simplex with labels $\{m_0, \ldots, m_d\}$ and set $a_0=1$. Let $z=(z_0, \ldots, z_d) \in \C^d \smallsetminus \{0\}$, and set $d_z=\gcd (a_j :z_j\neq 0)$.
 The isotropy group $\stab(z)$
 is an extension
$$
0 \longrightarrow  {\bigoplus_{i\in I_z} \Z/m_i \Z} \longrightarrow {\stab(z)} \longrightarrow {\Z/d_z \Z} \longrightarrow 0. 
$$
\end{proposition}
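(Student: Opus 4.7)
By Theorem~\ref{theorem:isotropy}, $\stab(z) \cong \Tor(N/N_z)$, where $N = \Z^d$ and $N_z = \langle \beta(e_i) : i \in I_z\rangle$ is generated by (a subset of) the explicit vectors $\{-m_0\mathbf{a}, m_1\epsilon_1, \ldots, m_d\epsilon_d\}$. The strategy is to compute $N/N_z$ directly and exhibit the desired short exact sequence. The case $0 \notin I_z$ is immediate: then $N_z = \bigoplus_{i \in I_z} m_i\Z\epsilon_i$, so $N/N_z \cong \bigoplus_{i\in I_z} \Z/m_i\Z \oplus \Z^{d-|I_z|}$ has torsion exactly $\bigoplus_{i\in I_z}\Z/m_i\Z$; and since $a_0 = 1$ contributes to $d_z$, we have $d_z = 1$, so the required sequence is trivially exact.

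Now suppose $0 \in I_z$, and set $J = I_z \setminus \{0\}$, $J^c = \{1,\ldots, d\}\setminus J$, $V = \bigoplus_{i\in J}\Z/m_i\Z$. First quotienting $N$ by $N_z' := \langle m_i \epsilon_i : i\in J\rangle$ yields $N/N_z' \cong V \oplus \Z^{J^c}$, in which the image of the remaining generator $-m_0\mathbf{a}$ of $N_z$ is $(-w_J,\, -m_0\mathbf{a}|_{J^c})$, where $w_J := (m_0 a_i \bmod m_i)_{i\in J} \in V$. Because $J^c = \{j : z_j \neq 0\}$, we have $d_z = \gcd(a_j : j\in J^c)$, and Smith normal form supplies a unimodular change of basis on $\Z^{J^c}$ carrying $\mathbf{a}|_{J^c}$ to $(d_z, 0, \ldots, 0)$. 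A free summand $\Z^{|J^c|-1}$ then splits off, leaving
\[
\stab(z) \;=\; \Tor(N/N_z) \;\cong\; (V\oplus \Z)\big/\langle(w_J,\, m_0 d_z)\rangle.
\]

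It remains to exhibit this finite quotient as the desired extension. Define $\phi: \stab(z) \to \Z/d_z\Z$ by $[(v,x)] \mapsto x \bmod d_z$; this is well-defined because the relation maps to $m_0 d_z \equiv 0 \pmod{d_z}$, and it is visibly surjective. Its kernel is the image of $V \oplus d_z\Z$, which under the rescaling $(v, d_z x) \leftrightarrow (v,x)$ identifies with $(V \oplus \Z)/\langle(w_J,\, m_0)\rangle$. I then claim this kernel is isomorphic to $V \oplus \Z/m_0\Z = \bigoplus_{i\in I_z} \Z/m_i\Z$, via the explicit map
\[
\psi: V \oplus \Z/m_0\Z \;\longrightarrow\; (V \oplus \Z)\big/\langle(w_J,\, m_0)\rangle, \qquad (v, \bar k) \;\longmapsto\; [(v + k v_0,\, k)],
\]
where $v_0 := (a_i \bmod m_i)_{i\in J} \in V$. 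The key identity $m_0 v_0 = w_J$ (which holds by definition of $w_J$) is exactly what is needed for $\psi$ to be well-defined, and the assignment $[(v,x)] \mapsto (v - x v_0,\, \bar x)$ is a straightforward two-sided inverse.

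The main obstacle is spotting the splitting element $v_0 \in V$; its existence is not automatic but is a feature particular to the labelled sheared simplex, where $\beta(e_0) = -m_0\mathbf{a}$ is visibly $m_0$ times a vector whose reduction lies in $V$. The change-of-basis step extracting $d_z$ from $\mathbf{a}|_{J^c}$ is essentially elementary linear algebra over $\Z$, but it does the key work of separating the torsion contribution from the free ranks that must be discarded when passing to $\Tor$.
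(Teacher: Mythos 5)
Your argument is correct. Every step checks out: the case $z_0\neq 0$ is handled properly (and $a_0=1$ does force $d_z=1$ there); in the case $z_0=0$ the iterated quotient, the unimodular reduction of $\mathbf{a}|_{J^c}$ to $(d_z,0,\ldots,0)$, the resulting presentation $\stab(z)\cong (V\oplus\Z)/\langle(w_J,\,m_0d_z)\rangle$, and the well-definedness and bijectivity of $\phi$ and $\psi$ (resting on the identity $m_0v_0=w_J$) are all sound, and they assemble into the stated short exact sequence.

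The route differs from the paper's in execution, though both exploit the same structural fact that $\beta$ factors through the diagonal matrix of labels. The paper factors $B_z=B'_z L_z$ with $L_z=\mathrm{diag}(m_i)_{i\in I_z}$ and applies the Snake Lemma to produce $0\to\bigoplus_{i\in I_z}\Z/m_i\Z\to\coker B_z\to\coker B'_z\to 0$ in one stroke; the price is that this sequence lives at the level of infinite cokernels, so the paper must then compute $\coker B'_z\cong\Z^{d-|I_z|}\oplus\Z/d_z\Z$ and argue that the free summand lifts to a free submodule of $\coker B_z$ before passing to torsion. You instead kill the free rank first (via the Smith normal form step on $\mathbf{a}|_{J^c}$), reducing to a finite group, and then build the extension by hand; notably, your kernel is assembled from $V=\bigoplus_{i\in J}\Z/m_i\Z$ plus a $\Z/m_0\Z$ that only appears at the end via the splitting element $v_0$, whereas the paper's sub-object $\bigoplus_{i\in I_z}\Z/m_i\Z$ (including $i=0$) arises all at once as $\coker L_z$. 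What your version buys is an explicit presentation $\stab(z)\cong(V\oplus\Z)/\langle(w_J,\,m_0d_z)\rangle$ together with explicit formulas for the maps in the extension, which makes concrete computations (such as those in Section~\ref{se:2dimsheared} and the determination of when the extension splits) immediate; what the paper's version buys is brevity and a cleaner conceptual source for the sequence.
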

\begin{proof}

Recall that by Theorem \ref{theorem:isotropy}, the isotropy group $\stab(z)$
is isomorphic to the torsion submodule of $N/N_z$.
 Form the matrix $B_z$ by deleting the $i$-th column of $\beta$ whenever $z_i\neq 0$
 Viewing $B_z$ as a homomorphism $\Z^{|I_z|} \to \Z^d$ realizes $N/N_z = \coker B_z$. We compute the torsion submodule of $\coker B_z$ next.

If $z_0 \neq 0$,
then one may readily see that $\Tor(N/N_z) \cong \bigoplus_{i\in I_z} \Z/m_i \Z$.  It remains to consider the case where $z_0=0$
(i.e. with corresponding matrix $B_z$ containing the first column of $\beta$).

To begin, observe that $\beta:\Z^{d+1} \to N=\Z^d$ factors as the composition $\Z^{d+1} \stackrel{L}{\longrightarrow} \Z^{d+1} \stackrel{\beta'}{\longrightarrow}\Z^d$, where
$$
L=
\begin{bmatrix}
m_0 & & 0 \\
& \ddots & \\
0& & m_d
\end{bmatrix}, \text{ and} \quad
\beta'=
\begin{bmatrix}
-a_1 &1 & & 0 \\
\vdots & & \ddots & \\
-a_d & 0& & 1
\end{bmatrix}.
$$
Accordingly, we may factor $B_z$ as a composition $\Z^{|I_z|} \stackrel{L_z}{\longrightarrow}  \Z^{|I_z|} \stackrel{B'_z}{\longrightarrow} \Z^d$, where $B'_z$ is a matrix whose first column is the first column of $\beta'$ and whose other columns are standard basis vectors $e_i \in \Z^d$ for $i\neq 0$  in $I_z$.  This factorization yields the following diagram of short exact sequences.
$$
\xymatrix{
0 \ar[r] & \Z^{|I_{z}|} \ar[r]^{L_{z}} \ar[d]^{B_{z}} &  \Z^{|I_{z}|} \ar[r] \ar[d]^{B'_{z}} & {\bigoplus_{i\in I_{{z}}} \Z/m_i\Z } \ar[r] \ar[d] & 0 \\
0 \ar[r] & \Z^d \ar@{=}[r] & \Z^d \ar[r] & 0 \ar[r] & 0
}
$$
Applying the Snake Lemma (and observing that $B'_{z}$ is injective) yields the exact sequence,
\begin{equation} \label{eq:snake}
\xymatrix{
0 \ar[r] & {\bigoplus_{i\in I_{z}} \Z/m_i \Z} \ar[r] & {\coker B_{z}} \ar[r] & {\coker B'_{z}} \ar[r] &0. \\
}
\end{equation}

Let $d_{z} = \mathrm{gcd} \{a_i : i \notin I_{z} \}$.  Then $B'_{z}$ is row equivalent to the matrix $C'_{z}$ obtained from $B'_{z}$ by replacing all $a_i$ with $i\notin I_{z}$ with 0's except for one which is replaced with $d_{z}$.  It follows that $\coker B'_{z} \cong \Z^{d-|I_{z}|} \oplus \Z/d_{z} \Z$.  Moreover, we may describe the generators of the free summand as follows.  
Let $E_{z}:\Z^d \to \Z^d$ denote the invertible homomorphism defined by $C'_{z} = E_{z} B'_{z}$.  Then  the free summand is generated by the images of the $E^{-1}(e_i)$ ($i\notin I_{z}$) in $\Z^d/\im B'_{z}$. It follows that the $E^{-1}(e_i)$ ($i\notin I_{z}$) must generate a free submodule in $\coker B_{z}$, which maps isomorphically onto the free summand of $\coker B'_{z}$.  Thus, we may pass to torsion submodules in (\ref{eq:snake}):
$$
0 \longrightarrow  {\bigoplus_{i\in I_{z}} \Z/m_i \Z} \longrightarrow {\Tor(\coker B_{z})} \longrightarrow {\Z/d_{z} \Z} \longrightarrow 0. 
$$
\end{proof}

As we will see in Section~\ref{se:2dimsheared}, the group extension in Proposition~\ref{prop:shimplexisotropy} can be non-trivial.  For example, consider the labelled sheared simplex $\Delta(1,2)$ with labels $m_0=2$, $m_1=4$, $m_2=1$.  By Proposition~\ref{corollary:shearedisotropy} in that Section (or by direct calculation), the isotropy for points of the form $(0,0,z) \in Z_{\Sigma(\Delta)}$ ($z\neq 0$) is $\Z/2\Z \oplus \Z/8\Z$, a non-trivial extension of $\Z/2\Z$ by $\Z/2\Z \oplus \Z/4\Z$.
\medskip

Using Corollary \ref{cor:globalquotientcondition}, we may also characterize those labelled sheared simplices yielding global quotient stacks.  Note that since the toric DM stack constructed from a labelled sheared simplex is a fake weighted projective stack, it is immediate that a global quotient in this case must then be a quotient of smooth projective stack $\calP^d$.

\begin{proposition} \label{prop:shimplexglobalquotient}
Let $(\Z^d,\Delta(\mathbf{a}),\beta)$  be a labelled sheared simplex with labels $\{m_0, \ldots, m_d\}$.
 The toric DM stack $\mathcal{X}(\Z^d,{\Delta(\mathbf{a})},\beta)$ is equivalent to a global quotient if and only if $m_i=m_0a_i$ for all $i=1\ldots,d$.
\end{proposition}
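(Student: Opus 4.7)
The plan is to invoke the combinatorial criterion from Corollary~\ref{cor:globalquotientcondition}(1): the toric DM stack $\mathcal{X}(\Z^d,\Delta(\mathbf{a}),\beta)$ is a global quotient if and only if $N_\sigma = N_0$ for every maximal cone $\sigma$ of $\Sigma(\Delta(\mathbf{a}))$, where $N_0 = \operatorname{im}\beta$. Since $N=\Z^d$ is torsion free, the content reduces to a direct lattice computation, and the strategy is to describe all of the maximal cones, read off each $N_\sigma$ explicitly, and then extract the claimed equalities $m_i = m_0 a_i$.

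First I would identify the maximal cones of the dual fan $\Sigma(\Delta(\mathbf{a}))$. The vertices of $\Delta(\mathbf{a})$ are the origin together with $\frac{\operatorname{lcm}(\mathbf{a})}{a_j}\epsilon_j$ for $j=1,\ldots,d$, so there are $d+1$ maximal cones: the cone $\sigma_0$ opposite the origin, generated by the rays $\R_{\geq 0}\epsilon_1, \ldots, \R_{\geq 0}\epsilon_d$; and, for each $j=1,\ldots,d$, the cone $\sigma_j$ opposite the vertex $\frac{\operatorname{lcm}(\mathbf{a})}{a_j}\epsilon_j$, generated by the rays $\R_{\geq 0}(-\mathbf{a})$ and $\R_{\geq 0}\epsilon_i$ for $i\neq j$. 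Using $\beta(e_0)=-m_0\mathbf{a}$ and $\beta(e_i)=m_i\epsilon_i$, this yields
\[
N_{\sigma_0} = m_1\Z\epsilon_1 \oplus \cdots \oplus m_d\Z\epsilon_d, \qquad
N_{\sigma_j} = \langle -m_0\mathbf{a}\rangle + \bigoplus_{i\neq j} m_i\Z\epsilon_i,
\]
while $N_0=\operatorname{im}\beta = \langle -m_0\mathbf{a}, m_1\epsilon_1,\ldots,m_d\epsilon_d\rangle$.

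Next I would translate the equalities $N_\sigma = N_0$ into arithmetic conditions on the labels. Since $m_j\epsilon_j \in N_0$ automatically, the condition $N_{\sigma_0}=N_0$ is equivalent to $-m_0\mathbf{a}\in N_{\sigma_0}$, which coordinate-wise reads $m_i \mid m_0 a_i$ for $i=1,\ldots,d$. For $j\geq 1$, the condition $N_{\sigma_j}=N_0$ amounts to $m_j\epsilon_j \in N_{\sigma_j}$; projecting onto the $j$-th coordinate the lattice $N_{\sigma_j}$ becomes $m_0 a_j\Z$, so this requires $m_0 a_j \mid m_j$ (and once this holds, the remaining coordinates are handled by the condition $m_i \mid m_0 a_i$ from the $\sigma_0$ analysis). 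Combining $m_j\mid m_0 a_j$ with $m_0 a_j\mid m_j$ gives $m_j = m_0 a_j$ for every $j=1,\ldots,d$, which is the stated condition.

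Conversely, assuming $m_i=m_0 a_i$ for all $i\geq 1$, one checks immediately that $-m_0\mathbf{a} = -\sum_i m_i\epsilon_i$ lies in $N_{\sigma_0}$, so $N_{\sigma_0}=N_0$; and for each $j$, the identity $m_j\epsilon_j = -(-m_0\mathbf{a}) - \sum_{i\neq j} m_i\epsilon_i$ puts $m_j\epsilon_j$ in $N_{\sigma_j}$, so $N_{\sigma_j}=N_0$. The main point that requires care is the coordinate-wise projection argument establishing the divisibility $m_0 a_j\mid m_j$; beyond that, everything is a direct computation using the explicit shape of $\beta$.
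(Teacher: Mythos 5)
Your proposal is correct and follows essentially the same route as the paper: both invoke Corollary~\ref{cor:globalquotientcondition}(1), compute $N_{\sigma_0}$ and $N_{\sigma_j}$ for the $d+1$ maximal cones, and extract the two divisibilities $m_i \mid m_0 a_i$ (from $\sigma_0$) and $m_0 a_j \mid m_j$ (from the $j$-th coordinate of the $\sigma_j$ condition) to conclude equality. Your coordinate-projection phrasing is just a repackaging of the paper's explicit integer linear combinations.
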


\begin{proof}
We apply Corollary \ref{cor:globalquotientcondition} and \cite[Theorem 4.4]{HaradaKrepski:2011}.   Let $\sigma_j$ be the maximal cone generated by the rays $\{\rho_0, \ldots, \widehat{\rho_j}, \ldots, \rho_d\}$, where $\widehat{\phantom{\rho_j}}$ signifies omission from the list.  Then 
$$
N'_{\sigma_0} = \mathrm{span} \{ m_1e_1, \ldots, m_de_d\}, \quad \text{and} \quad N'_{\sigma_j} = \mathrm{span} \{m_0 \sum_{i=1}^d a_i e_i, m_1 e_1, \ldots, \widehat{m_j e_j}, \ldots, m_d e_d \}.
$$
Observe that if $m_0a_i = m_i$ for all $i$, then it is clear that $N'=N'_{\sigma_j}$ for all $j=0\ldots, d$.  

To prove the converse, suppose $N'=N'_{\sigma_j}$ for all $j=0, \ldots d$.  For $j=0$, this implies that there exist $\alpha_1, \ldots, \alpha_d \in \Z$ such that
$$
\sum_{i=1}^d ( m_0 a_i- \alpha_i m_i )e_i =0,
$$
and thus $m_i | m_0 a_i$ for $i= 1\ldots d$.  Similarly, for $j=1, \ldots, d$, we see that there exist $\gamma_0, \ldots, \widehat{\gamma_j}, \ldots, \gamma_d \in \Z$ such that 
$$
m_j e_j = \gamma_0 m_0 \sum_{i=1}^d a_i e_i + \sum_{i\neq j} \gamma_i m_i e_i,
$$
or equivalently,
$$
\sum_{i\neq j} (\gamma_0 m_0 a_i + \gamma_i m_i) e_i + (\gamma_0 m_0 a_j -m_j)e_j =0.
$$
Therefore, $m_0a_j | m_j$ for $j=1, \ldots, d$ whence $m_0a_i = m_i$ for all $i$.  
\end{proof}

\subsection{$2$--dimensional labelled sheared simplices}\label{se:2dimsheared}

In this final section, we illustrate the results in previous sections by considering 
 the class of toric DM stacks arising from $2$-dimensional labelled sheared simplices.
As a consequence of Theorem \ref{theorem:isotropy}, we can now determine the isotropy groups corresponding to a labelled sheared simplex in the plane (see also Remark \ref{remark:SNF}).  

Let $\mathbf{a}=(a_1, a_2)$ be a primitive vector in the positive quadrant, and suppose $(\Z^2, \Delta(\mathbf{a}),\beta)$ is a labelled sheared simplex with labels $\{ m_0, m_1, m_2\}$.  Explicitly,  $\Delta(\mathbf{a})$ is the convex hull of the origin together with $(a_2,0)$ and $(0,a_1)$, with assigned labels $m_1$ to the edge along the $y$-axis, $m_2$ to the edge along the $x$-axis, and $m_0$ to the remaining edge (see Figure \ref{triangle}).  
\begin{figure}[h]
\setlength{\unitlength}{1.2cm}
\begin{center}
\begin{picture}(5,3.25)(-1.,-.5)
	\put(0,0){\circle*{.075}}
	\put(0,2){\circle*{.075}}
	\put(3,0){\circle*{.075}}
	\put(0,0){\line(0,1){2}}
	\put(0,0){\line(1,0){3}}
	\put(3,0){\line(-3,2){3}}
	\put(-.4,1){$m_1$}
	\put(1.4,-.25){$m_2$}
	\put(1.375,1.325){$m_0$}
	\put(-0.7,2.125){$\small{(0, a_1)}$}
	\put(3.125,-0.125){$\small{(a_2,0)}$}
	\put(-0.7,-0.125){$\small{(0,0)}$}
\end{picture}
\end{center}
\caption{A labelled sheared simplex in the plane.\label{triangle}}
\end{figure}
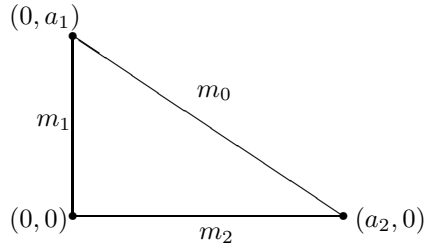

The isotropy for points  $(z_0, 0, 0) \in Z_\Sigma= \C^3\smallsetminus \{0\}$ with $z_0\neq0$ is  easily seen to be $\Z/m_1\Z \oplus \Z/m_2\Z$ (by Proposition \ref{prop:shimplexisotropy}). For points of the form $z=(0,0,z_2)$ with $z_2 \neq 0$---i.e. for points corresponding to the vertex $(0,a_1)$---we shall describe the isotropy $\stab(z)$ below.  (The isotropy for points of the form $z=(0,z_1, 0)$ with $z_1\neq 0$ can be obtained by exchanging the indices $1$ and $2$.)

We begin with a Lemma describing the Smith normal form of an integer matrix with exactly one zero entry.
\begin{lemma}\label{le:SNF}
 For non-zero  $a,b,c \in \Z$, the Smith Normal Form of $  \left[\begin{array}{cc}a & b \\ c & 0\end{array}\right] 
$ is
$ \left[\begin{array}{cc}g & 0 \\ 0 & b c/g\end{array}\right],
$ where
$g = \gcd(a,b,c)$.
\end{lemma}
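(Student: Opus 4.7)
The plan is to invoke the standard characterization of the Smith Normal Form via determinantal divisors (elementary divisors). Recall that for an integer matrix $M$ of rank $r$, the Smith Normal Form has diagonal entries $d_1 \mid d_2 \mid \cdots \mid d_r$ determined by the rule $d_1 d_2 \cdots d_k = \Delta_k(M)$, where $\Delta_k(M)$ denotes the greatest common divisor of all $k \times k$ minors of $M$ (with $\Delta_0 = 1$). So I would simply compute $\Delta_1$ and $\Delta_2$ for the $2 \times 2$ matrix in question.

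First, I would observe that $\Delta_1\!\left(\begin{bmatrix} a & b \\ c & 0\end{bmatrix}\right) = \gcd(a,b,c,0) = \gcd(a,b,c) = g$, since a zero entry does not affect the gcd. Hence $d_1 = g$. Next, the single $2\times 2$ minor is the determinant $-bc$, which is nonzero (as $a,b,c$ are nonzero would only be needed to ensure we have rank $2$; here the key point is $b,c \neq 0$), so $\Delta_2 = |bc|$ and the matrix has rank $2$. Therefore $d_1 d_2 = |bc|$, yielding $d_2 = bc/g$ (possibly up to sign, which is absorbed by multiplying one of the unimodular factors by $-1$ so that SNF entries are conventionally taken nonnegative).

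Finally, I would verify the divisibility condition $d_1 \mid d_2$, i.e. $g \mid bc/g$, or equivalently $g^2 \mid bc$. This is immediate from $g \mid b$ and $g \mid c$, so $g^2 \mid bc$, confirming that $\begin{bmatrix} g & 0 \\ 0 & bc/g\end{bmatrix}$ satisfies all the requirements of the Smith Normal Form.

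There is no real obstacle here; the statement is a direct consequence of the determinantal-divisors characterization. The only mild subtlety is the sign convention, which is routine. Alternatively, for a more hands-on (and equally short) argument, one could exhibit explicit invertible integer matrices $U,V$ with $U \begin{bmatrix} a & b \\ c & 0 \end{bmatrix} V = \begin{bmatrix} g & 0 \\ 0 & bc/g \end{bmatrix}$ by performing explicit row and column operations using a B\'ezout identity $g = \alpha a + \beta b + \gamma c$, but the invariant-based proof above is cleaner.
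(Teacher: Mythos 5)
Your proof is correct, and it takes a genuinely different route from the paper. You use the determinantal-divisor characterization of the Smith Normal Form: $d_1=\Delta_1=\gcd(a,b,c,0)=g$ and $d_1d_2=\Delta_2=|{-bc}|=|bc|$, so $d_2=|bc|/g$, with the divisibility $d_1\mid d_2$ following from $g\mid b$ and $g\mid c$, hence $g^2\mid bc$. The paper instead carries out the explicit constructive version you mention only in passing at the end: it produces concrete matrices in $\mathrm{SL}_2(\Z)$ conjugating the given matrix to diagonal form. That route requires a genuinely nontrivial number-theoretic step --- showing that among the B\'ezout coefficients $x$ with $xa+yb=d=\gcd(a,b)$ one can always choose $x$ coprime to $d$ (so that $\gcd(cx,d)=\gcd(c,d)=g$), which the paper proves by a separate argument involving the prime factorization of $d$. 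Your invariant-based argument bypasses that entirely and is shorter and cleaner; what the paper's construction buys in exchange is the explicit unimodular change-of-basis matrices, which could be used to write down generators of the resulting cyclic factors rather than just their orders (though for the application in Proposition~\ref{corollary:shearedisotropy} only the invariant factors are needed, so your proof fully suffices). Your handling of the sign is also fine --- indeed the paper's own displayed diagonal matrix has $-bc/g$ in the corner and absorbs the sign by convention, just as you do.
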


\begin{proof}
Suppose that $\gcd(a,b) = d$.  Then, we claim that there exist $x,y \in \Z$ such that $x a  + y b = d$, and $\gcd(x,d)=1$.

To prove this claim, we first note that it is equivalent to the following:  Suppose $u,v \in \Z$ are relatively prime. Consider the set of solutions $X=\{ x \mid x u+y v=1 \mbox{ for some } y \in \Z \}$. For any given integer $d$,  there is some $x \in X$ so that $\gcd(x,d)=1$.

In this latter formulation, let $d \in \Z$ be given and suppose that $x_0$ is any solution to  $x_0 u +  y v =1$.  Recall that all solutions are then of the form $x = x_0 + t  v$ with $t\in \Z$.  Moreover, $x_0 u + y v = 1$ implies that $\gcd(x_0, v) =1$.  Then, showing that there is $x \in X$ such that $\gcd(x,d) =1$ is equivalent to showing that there is $t \in \Z$ such that $\gcd(x_0+t v, d)=1$.  We will construct such an integer, $t$.

Suppose that $d = p_1^{\alpha_1} \cdot \ldots \cdot p_s^{\alpha_s}$ is the prime factorization of $d$.  Let $t = \prod p_i$ such that $p_i$ does not appear in the prime factorization of either $x_0$ or $v$.  Because $x_0$ and $v$ are relatively prime, it follows that in the sum $x_0 + t v$ each prime in the factorization of $d$ appears exactly once.  That is, each $p_i$ divides exactly one of $x_0$ or $t v$.  Thus, $d$ cannot divide the sum and $\gcd(x,d) =1$.

Therefore, we may find $x$ and $y$ such that $a x + b y = d$ and $\gcd(x,d)=1$.  Moreover, since $\gcd(c x,d)  = \gcd(c,d) = \gcd(a, b, c) = g$,  there exist $p,q \in \Z$ such that $p (c x) + q d = g$. Hence,
$$
 \left[ \begin{array}{cc}  x & -b/d\\y & a/d\end{array}\right],\left[ \begin{array}{cc}  q & p\\ - c x/g& d/g \end{array}\right]  \in \mathrm{SL}_2(\Z),
 $$
  and  
 $$ 
 \left[ \begin{array}{cc}  q & p\\ - c x/g& d/g \end{array}\right] 
 \left[ \begin{array}{cc}  a  & b\\c & 0\end{array}\right]
 \left[ \begin{array}{cc}  x & -b/d\\y & a/d \end{array}\right] = 
 \left[ \begin{array}{cc}  g & -c b p/d\\0 & -b c /g\end{array}\right].
 $$
Recall that $d \mid b$ and that $g \mid c$, so $g \mid  (c b p/d)$.  Thus, by elementary column operations
\[  \left[ \begin{array}{cc}  g & -c b p/d\\0 & -b c /g\end{array}\right] \longleftrightarrow    \left[ \begin{array}{cc}  g & 0\\0 & -b c /g\end{array}\right].\]

Finally,  note that $g \mid (b c /g)$, so the above is the desired Smith normal form.
\end{proof}

We now can give the explicit form of the isotropy groups of a labelled sheared simplex in the plane.

\begin{proposition}\label{corollary:shearedisotropy}
 Let $(\Z^2, \Delta(\mathbf{a}), \beta)$ be a labelled sheared simplex with labels $\{m_0, m_1, m_2\}$ and $[Z_\Sigma/G]$ its corresponding toric DM stack.  
The isotropy of   $z=(0,0,z_2) \in Z_\Sigma$ with $z_2 \neq 0$ is
$\stab(z)\cong  \Z/g \Z \oplus \Z/((m_0 m_1 a_2) /g)\Z$, where $g = \gcd(m_0, m_1)$.
\end{proposition}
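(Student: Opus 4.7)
The plan is to apply Theorem~\ref{theorem:isotropy} directly, so that $\stab(z)$ is identified with the torsion part of $N/N_z$, and then to compute $\coker B_z$ explicitly via the Smith normal form provided by Lemma~\ref{le:SNF}.

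First I would identify $I_z$. Since $z=(0,0,z_2)$ with $z_2\neq 0$, we have $I_z=\{0,1\}$, so $N_z\subset N=\Z^2$ is the subgroup generated by $\beta(e_0)=-m_0(a_1,a_2)$ and $\beta(e_1)=(m_1,0)$. Thus $N/N_z$ is the cokernel of the integer matrix
\[
B_z=\begin{bmatrix} -m_0a_1 & m_1\\ -m_0a_2 & 0\end{bmatrix}.
\]
Signs may be absorbed by unimodular row/column operations, so $\coker B_z\cong\coker\begin{bmatrix} m_0a_1 & m_1\\ m_0a_2 & 0\end{bmatrix}$.

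Next I would apply Lemma~\ref{le:SNF} with $a=m_0a_1$, $b=m_1$, $c=m_0a_2$. The Lemma gives Smith normal form $\operatorname{diag}(g',\,bc/g')$ with $g'=\gcd(m_0a_1,m_1,m_0a_2)$. The key observation is
\[
g'=\gcd(m_0a_1,m_0a_2,m_1)=\gcd\bigl(m_0\gcd(a_1,a_2),\,m_1\bigr)=\gcd(m_0,m_1)=g,
\]
where the middle equality uses that $\mathbf{a}=(a_1,a_2)$ is primitive, i.e.\ $\gcd(a_1,a_2)=1$. The other diagonal entry is $bc/g'=m_1m_0a_2/g$.

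Finally, since both diagonal entries are nonzero positive integers, $\coker B_z=N/N_z$ is finite, hence equals its own torsion subgroup, giving
\[
\Tor(N/N_z)\cong\Z/g\Z\oplus\Z/(m_0m_1a_2/g)\Z.
\]
By Theorem~\ref{theorem:isotropy}, $\stab(z)\cong\Tor(N/N_z)$, which is the claimed form. The only step that requires any care is recognizing that primitivity of $\mathbf{a}$ collapses the triple gcd to $\gcd(m_0,m_1)$; everything else is a routine application of the Smith normal form lemma.
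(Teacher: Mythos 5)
Your proposal is correct and follows essentially the same route as the paper: identify $N/N_z$ as $\coker B_z$ for $B_z=\begin{bmatrix}-m_0a_1 & m_1\\ -m_0a_2 & 0\end{bmatrix}$, apply Lemma~\ref{le:SNF}, and use primitivity of $\mathbf{a}$ to collapse $\gcd(m_0a_1,m_1,m_0a_2)$ to $\gcd(m_0,m_1)$. The only difference is that you spell out the gcd reduction, which the paper asserts without comment.
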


\begin{proof}
We consider the map $\beta: \Z^3 \rightarrow  \Z^2$ given by the matrix
\[ \beta = \left[ \begin{array}{ccc} - m_0   a_1 & m_1 & 0\\ -m_0 a_2 & 0 & m_2\end{array}\right].\]
By Lemma \ref{le:SNF}, the Smith normal form of $B_{z}=\left[ \begin{array}{cc} - m_0  a_1 & m_1 \\ -m_0 a_2 & 0 \end{array}\right]$ is
$\left[\begin{array}{cc}g & 0\\0 &  m_0 m_1 a_2 /g \end{array}\right]$
since $g=\gcd(m_0,m_1)=\gcd(m_0a_1,m_1,m_0a_2)$, which by Theorem \ref{theorem:isotropy} gives the result.
\end{proof}

Though Proposition \ref{corollary:shearedisotropy} gives the general form of the  isotropy group of points corresponding to the vertex $(0,a_1)$ of a sheared simplex, it can be instructive to consider several special cases to illustrate the interplay of the facet labels and the geometry of the sheared simplex---see Table \ref{table:isotropy}.

\begin{table}[h] 
\begin{tabular}{|c| c| c|}
\hline \textbf{Labels} & \textbf{Lengths}  & $\stab(z)$ \\
\hline
\multirow{2}{*}{
$m_0=m_1=m_2=1$} & $a_1=a_2=1$ &  $\{1\}$; i.e., smooth\\
\cline{2-3}
 & $a_1,a_2$ arbitrary & $\Z/a_1\Z$\\
\hline
\multirow{2}{*}{
$m_0,m_1,m_2$ arbitrary} & $a_1 = a_2 =1$ &  $\Z/m_0 \Z \oplus \Z / m_1\Z$\\
\cline{2-3}
  & $a_1,a_2$ arbitrary  & $\Z/g\Z \oplus \Z/(m_0 m_1 a_2/g)\Z$\\
\hline
\end{tabular}
\caption{The isotropy group $\stab(z)$ corresponding to points of the form $(0,0,z_2)$ with $z_2 \neq 0$ (i.e. corresponding to the vertex $(0,a_1)$ of $\Delta$) for a toric DM stack corresponding to a labelled sheared simplex $(\Z^2,\Delta(\mathbf{a}),\beta)$. Here, $g=\gcd(m_0,m_1)$.}
\label{table:isotropy}
\end{table}

\def\cprime{$'$}

\end{document}